\newcommand{\coverpage}[3]{\thispagestyle{empty}
    \addtocounter{page}{-1}
\null\vspace*{-1cm} \hfill\includegraphics[scale=0.2]{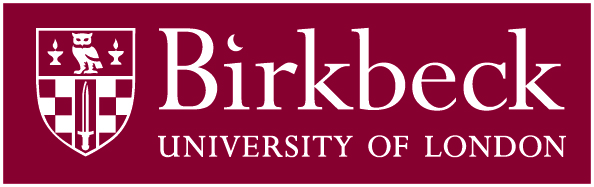} \vskip 2in
\begin{center} \begin{minipage}{0.7\textwidth}\begin{center}\Huge\bf{#1}\end{center} \end{minipage}\end{center}  \vfill
\begin{center} {\large By}\bigskip\\ {\large #2}\\ \end{center} \vfill 
 \framebox{\begin{minipage}{\textwidth}
Birkbeck Mathematics Preprint Series\hfill
Preprint Number #3 \\ \\
\null\hfill www.bbk.ac.uk/ems/research/pure/preprints 
\end{minipage}}
\newpage}
\algnewcommand\True{\textbf{true}\space}
\algnewcommand\False{\textbf{false}\space}
\algnewcommand\Not{\textbf{not}\space}
\algnewcommand\algorithmicforeach{\textbf{for each}}
\newtheorem{thm}{Theorem}[section]
\newtheorem*{theorem-non}{Theorem}
\newtheorem{lemma}[thm]{Lemma}
\newtheorem{prop}[thm]{Proposition}
\newtheorem{cor}[thm]{Corollary}
\newtheorem{conj}[thm]{Conjecture}
\theoremstyle{plain}
\newtheorem{obs}[thm]{Observation}
\theoremstyle{definition}
\newtheorem{notation}[thm]{Notation}
\begin{document}

\coverpage{Groups whose locally maximal product-free sets are complete}{Chimere S. Anabanti, Grahame Erskine and Sarah B. Hart}{25}

\title{Groups whose locally maximal product-free sets are complete}
\author{Chimere S. Anabanti \thanks{The first author is supported by a Birkbeck PhD Scholarship}\\ c.anabanti@mail.bbk.ac.uk \and Grahame Erskine \\ grahame.erskine@open.ac.uk\and Sarah B. Hart\\ s.hart@bbk.ac.uk}
\date{}

\maketitle

\begin{abstract}
\noindent Let $G$ be a finite group and $S$ a subset of $G$. Then $S$ is {\em product-free} if $S \cap SS = \emptyset$, and {\em complete}  if $G^{\ast} \subseteq S \cup SS$.  A product-free set is {\em locally maximal} if it is not contained in a strictly larger product-free set. If $S$ is product-free and complete then $S$ is locally maximal, but the converse does not necessarily hold. Street and Whitehead \cite{SW1974} 
 defined a group $G$ as {\em filled} if every locally maximal product-free set $S$ in $G$ is complete (the term comes from their use of the phrase `{\em $S$ fills $G$}' to mean $S$ is complete). They classified all abelian filled groups, and conjectured that the finite dihedral group of order $2n$ is not filled when $n=6k+1$ ($k\geq 1$). The conjecture was disproved by two of the current authors in \cite{AH2015}
 , where we also classified the filled groups of odd order.  In this paper we classify filled dihedral groups, filled nilpotent groups and filled groups of order $2^np$ where $p$ is an odd prime. We use these results to determine all filled groups of order up to 2000. 
\end{abstract}

\section{Preliminaries}
Let $S$ be a non-empty subset of a group $G$. We say $S$ is {\em product-free} if $S\cap SS=\varnothing$, where $SS = \{ab: a, b \in S\}$ (in particular $a$ and $b$ are not necessarily distinct).  A product-free set $S$ is said to be {\em locally maximal} if whenever $\Sigma$ is product-free in $G$ and $S\subseteq \Sigma$, then $S=\Sigma$. A product-free set $S$ of $G$ is {\em complete} (or, equivalently, {\em fills} $G$) if $G^*\subseteq  S \cup SS$ (where $G^*$ is the set of all non-identity elements of $G$).  Complete product-free sets have cropped up in many areas; some of these were described by Cameron in \cite{cameron1}; see also \cite{cameron2}. For example, complete product-free symmetric subsets of groups give rise in a natural way to regular triangle-free graphs of diameter 2. In an investigation of these graphs, Hanson and Seyffarth \cite{hs} showed, by exhibiting small complete product-free sets of cyclic groups, that the smallest valency of such a graph on $n$ vertices is within a constant factor of the trivial bound of $\sqrt{n}$. In the abelian case the term {\em sum-free} is used. Complete sum-free sets have also been investigated by Payne \cite{payne}, Calkin and Cameron \cite{calkin}, amongst others. There is also a link to finite geometry:  complete sum-free sets in the elementary abelian group of order $2^{n}$ correspond to complete caps in the projective space $PG(n-1,2)$; that is, collections of points,  maximal by inclusion, with no three collinear. However there does not seem to have been much work done for non-abelian groups, where of course we speak of product-free sets. It's easy to see that all complete product-free sets are locally maximal. The converse often holds in that many locally maximal product-free sets are complete. In the extreme case we say $G$ is a {\em filled group} if every locally maximal product-free set in $G$ is complete (or, using the notation of Street and Whitehead \cite{SW1974}, every locally maximal product-free set in $G$ fills $G$).  The definition of a filled group, due to Street and Whitehead, was motivated by the observation that a product-free set in an elementary abelian 2-group $A$ is locally maximal if and only if it fills $A$, and hence the elementary abelian 2-groups are filled groups. They asked which other groups, if any, are filled. In \cite{SW1974} they classified the filled abelian groups and the first few dihedral groups. In this paper, we classify filled groups of various kinds. In Section \ref{secdih} we deal with dihedral groups. Section \ref{secnil} covers nilpotent groups. Section \ref{sec2np} looks at groups of order $2^np$ where $p$ is an odd prime and $n$ is a positive integer. Finally in Section \ref{secalg} we describe algorithms which we have implemented in GAP \cite{GAP4}, that allow us to check for filled non-nilpotent groups of all orders up to 2000. In the rest of this section we establish notation and state some known results.

Throughout this paper, we write $C_n$ for the cyclic group of order $n$ and $D_{2n}$ for the dihedral group of order $2n$.  All groups in this paper are finite.

\begin{notation}\label{ts}
Let $S$ be a subset of a group $G$. We define $S^{-1}$, $T(S)$ and $\sqrt S$ as follows:
\begin{align*} 
S^{-1} &=\{s^{-1} : s \in S\};\\
T(S) &=S \cup SS \cup SS^{-1} \cup S^{-1}S;\\
\sqrt{S} &= \{x \in G:x^2 \in S\}.\end{align*}
\end{notation}

We end this section with the following result, which gathers together some useful facts that we will need.

\begin{thm}\label{T1}\begin{enumerate} 
\item[(i)] \cite[Lemma 3.1]{GH2009} Let $S$ be a product-free set in a finite group $G$. Then $S$ is locally maximal if and only if $G=T(S) \cup \sqrt{S}$.
\item [(ii)] \cite[Lemma 1]{SW1974} If $G$ is a filled group, and $N$ is a normal subgroup of $G$, then $G/N$ is filled.
\item [(iii)]\cite[Theorem 2]{SW1974} A finite abelian group is filled if and only if it is $C_3$, $C_5$ or an elementary abelian 2-group.
\item [(iv)] \cite[Lemma 2.3]{AH2015} The only filled group with a normal subgroup of index 3 is $C_3$.
\item [(v)] \cite[Lemma 2.5]{AH2015} If $G$ is a filled group with a normal subgroup $N$ of index $5$ such that not every element of order $5$ is contained in $N$, then $G \cong C_5$.
\item [(vi)]\cite[Theorem 2.6]{AH2015} The only filled groups of odd order are $C_3$ and $C_5$.
\item [(vii)] \cite[Prop 2.8]{AH2015} For $n \geq 2$, the generalized quaternion group of order $4n$ is not filled.
\end{enumerate}\end{thm}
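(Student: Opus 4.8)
Theorem~\ref{T1} bundles together results proved in the cited papers; I sketch a line of argument for each part, concentrating on the final and most structural item, part~(vii). For part~(i) the plan is to determine, for a product-free set $S$ and an element $x\notin S$, exactly when $S\cup\{x\}$ fails to be product-free. Since $S$ is product-free, any violation must involve $x$: either $x\in SS$, or one of $x^{2},\ xs,\ sx$ (for some $s\in S$) lies in $S\cup\{x\}$. Running through these possibilities — the cases $xs=x$ and $sx=x$ being impossible — shows that $S\cup\{x\}$ is not product-free precisely when $x^{2}\in S$ (that is, $x\in\sqrt S$) or $x\in SS\cup SS^{-1}\cup S^{-1}S$. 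As $S\subseteq T(S)$ always, $S$ is locally maximal exactly when no $x$ can be adjoined, i.e. when $G=T(S)\cup\sqrt S$.

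For part~(ii) I would prove the sharper statement that if $\bar S$ is locally maximal product-free in $G/N$ and $\pi\colon G\to G/N$ is the quotient map, then $\pi^{-1}(\bar S)$ is itself locally maximal product-free in $G$: product-freeness follows from $\pi^{-1}(\bar S)\pi^{-1}(\bar S)=\pi^{-1}(\bar S\bar S)$, and a short case check (watching the elements that fall into $N$) shows nothing can be adjoined. Since $G$ is filled, $\pi^{-1}(\bar S)$ is then complete, so $G^{*}\subseteq\pi^{-1}(\bar S\cup\bar S\bar S)$, and pushing this forward through $\pi$ gives completeness of $\bar S$. Parts~(iii)--(vi) then follow a common template: one checks directly that $C_3$, $C_5$ and the elementary abelian $2$-groups are filled (in the last case $\sqrt S=\varnothing$ and $S=S^{-1}$, so local maximality already forces $G^{*}\subseteq S\cup SS$), and for every remaining candidate group one exhibits a locally maximal product-free set that is not complete — most efficiently by using part~(ii) to pass to a quotient isomorphic to $C_3$ or $C_5$ and then invoking (iv) or (v). For instance, if $N\trianglelefteq G$ has index $3$ and $aN$ generates $G/N$, the coset $aN$ is product-free with $T(aN)=aN\cup a^{2}N\cup N=G$, so it is locally maximal, whereas $aN\cup(aN)(aN)=aN\cup a^{2}N=G\setminus N$, so it is complete only when $N=1$; thus any filled group with a normal subgroup of index $3$ equals $C_3$. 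Part~(vi) combines (iv), (v), the Feit--Thompson theorem (supplying a normal subgroup of prime index, which forces that prime into $\{3,5\}$), and an induction on $|G|$ via (ii).

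For part~(vii) write $G=Q_{4n}=\langle a,b\mid a^{2n}=1,\ b^{2}=a^{n},\ bab^{-1}=a^{-1}\rangle$ with $n\ge2$, put $H=\langle a\rangle\cong C_{2n}$ (a subgroup of index $2$), and let $z=a^{n}$ be the unique involution of $G$. The crucial observation is that every element of $G\setminus H$ has the form $a^{i}b$, and $(a^{i}b)^{2}=a^{i}a^{-i}b^{2}=b^{2}=z$. Hence if $S$ is any product-free subset of $H$ with $z\in S$, then — computing inside $G$ — the set $\sqrt S$ contains all of $G\setminus H$, while $T(S)\subseteq H$; so by part~(i), $S$ is locally maximal in $G$ the moment it is locally maximal inside $H$, yet $S\cup SS\subseteq H\subsetneq G$ shows $S$ is not complete. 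It is therefore enough to produce, for every $n\ge2$, a locally maximal sum-free subset of $C_{2n}$ containing $a^{n}$. When $n$ is odd the set of all odd powers of $a$ does the job: it is sum-free, contains $a^{n}$, and already satisfies $S\cup SS=H$. When $n$ is even a more careful choice is required; for example $\{a^{j}:j\equiv2\pmod{4}\}$ works when $n\equiv2\pmod{4}$ (its square-set is the set of even powers and its square-root set the set of odd powers), while a set of the shape $\{a^{n}\}\cup\{a^{j}:j\ \text{odd},\ |j|<n/2\}$ works when $n\equiv0\pmod{4}$.

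I expect the main obstacle to be precisely this last step. In the even case of part~(vii) the sum-free set in $C_{2n}$ has to \emph{simultaneously} contain $a^{n}$, remain sum-free, and satisfy $T(S)\cup\sqrt S=H$, and all three conditions must be verified by an explicit computation with residues; the case $n\equiv0\pmod{4}$, which has no clean ``odd powers'' description of the kind available when $n$ is odd, seems to need the somewhat ad hoc family indicated above (or an equally explicit substitute).
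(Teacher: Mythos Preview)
The paper does not prove Theorem~\ref{T1}; it simply assembles citations to \cite{GH2009}, \cite{SW1974} and \cite{AH2015}. Your sketches therefore go well beyond what the paper itself supplies, and there is no ``paper's proof'' to compare against beyond those references.

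Your arguments for (i), (ii), (iv) and, most importantly, (vii) are correct. For (vii) the reduction is exactly right: since every element of $Q_{4n}\setminus\langle a\rangle$ squares to $z=a^n$, any product-free $S\subseteq\langle a\rangle$ with $z\in S$ has $\sqrt S\supseteq Q_{4n}\setminus\langle a\rangle$, so local maximality in $\langle a\rangle\cong C_{2n}$ is equivalent to local maximality in $Q_{4n}$, while $S\cup SS\subseteq\langle a\rangle$ prevents completeness. Your three constructions all work; in particular, for $n\equiv 0\pmod 4$ the set $\{a^n\}\cup\{a^j:j\text{ odd},\ |j|<n/2\}$ actually \emph{fills} $C_{2n}$ (every even residue $k$ with $|k|\le n-2$ is a sum of two odd residues of absolute value at most $n/2-1$, and the products $a^n\cdot a^j$ supply the remaining odd residues), so the case you flagged as the main obstacle goes through cleanly.

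There is, however, a genuine gap at part (v). Your ``common template'' is the coset argument you wrote out for (iv), but that argument does not transfer to index~5: with $G/N\cong C_5$ and $S=aN$ one finds $T(S)=N\cup aN\cup a^2N$ and $\sqrt S=a^3N$, so $a^4N$ is missed and $S$ is \emph{not} locally maximal. More generally, any product-free union of $N$-cosets that \emph{is} locally maximal (such as $aN\cup a^4N$) already fills $G$, because $C_5$ is filled; so no set built purely from cosets can witness failure. The hypothesis in (v) that some element of order~5 lies outside $N$ is there precisely to allow a finer construction, and a different idea (working with an actual order-5 element rather than its coset) is needed. This gap propagates to your sketch of (vi): when the normal subgroup of index~5 happens to contain every element of order~5, (v) is unavailable and the induction does not close as written.
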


\section{Dihedral groups}\label{secdih}
A list of non-abelian filled groups of order less than or equal to $32$ was given in \cite{AH2015}. There are eight such groups: six are dihedral, and the remaining two are non-dihedral 2-groups. The dihedral groups on the list are those of order 6, 8, 10, 12, 14 and 22. Our aim in this section is to show that these are in fact the only filled dihedral groups. 

\begin{notation}
We write $D_{2n}=\langle x,y|~x^{n}=y^2=1, xy=yx^{-1} \rangle$ for the dihedral group of order $2n$ (where $n>2$). In $D_{2n}$, the elements of $\langle x\rangle$ are called rotations and the elements of $\langle x\rangle y$ are called reflections. For any subset $S$ of $D_{2n}$, we write $A(S)$ for $S \cap \langle x \rangle$, the set of rotations of $S$, and $B(S)$ for $S \cap \langle x \rangle y$, the set of reflections of $S$.
\end{notation}

\begin{obs}\label{obs1}
Suppose $S$ is a subset of $D_{2n}$. Let $A = A(S)$ and $B = B(S)$. Then,  because of the relations in the dihedral group, we have  $AA^{-1} = A^{-1}A$,  $AB = BA^{-1}$ and $B^{-1} = B$. Therefore \begin{align*}
SS &= AA \cup BB \cup AB \cup BA;\\
SS^{-1} &= AA^{-1} \cup BB \cup AB;\\
S^{-1}S &= AA^{-1} \cup BB \cup BA;\\
T(S) &= A \cup B \cup AA \cup AA^{-1} \cup BB \cup AB \cup BA \\ &= S \cup SS \cup AA^{-1}.
\end{align*}
We also  note that $\sqrt S = \sqrt A \subseteq \langle x \rangle$. 
\end{obs}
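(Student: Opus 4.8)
The plan is to treat the whole statement as a direct computation whose only substantive ingredient is the dihedral relation in the form $x^{i}y = yx^{-i}$ (equivalently $yx=x^{-1}y$), used repeatedly to slide rotations past reflections, together with the abelianness of $\langle x\rangle$ and the fact that every reflection is an involution. Before expanding anything I would isolate the three auxiliary identities $AA^{-1}=A^{-1}A$, $AB=BA^{-1}$, and $B^{-1}=B$.

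For the first, $A\subseteq\langle x\rangle$ is abelian, so $ab^{-1}=b^{-1}a$ for all $a,b\in A$ and hence $AA^{-1}=A^{-1}A$. For the second, take $a=x^{i}\in A$ and $b=x^{j}y\in B$; then $ab=x^{i+j}y=x^{j}(yx^{-i})=(x^{j}y)x^{-i}=ba^{-1}$, and letting $a$ run over $A$ and $b$ over $B$ gives $AB=BA^{-1}$ (the same computation with $A^{-1}$ in place of $A$ gives the companion identity $A^{-1}B=BA$, which I will also need). For the third, $(x^{j}y)^{-1}=y^{-1}x^{-j}=yx^{-j}=x^{j}y$, so every element of $B$ is its own inverse and $B^{-1}=B$.

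Next I would write $S=A\cup B$ and $S^{-1}=A^{-1}\cup B^{-1}=A^{-1}\cup B$, and distribute set multiplication over these unions: $SS=AA\cup AB\cup BA\cup BB$, $SS^{-1}=AA^{-1}\cup AB\cup BA^{-1}\cup BB$, and $S^{-1}S=A^{-1}A\cup A^{-1}B\cup BA\cup BB$. Substituting the auxiliary identities, $BA^{-1}=AB$ collapses $SS^{-1}$ to $AA^{-1}\cup AB\cup BB$, while $A^{-1}A=AA^{-1}$ and $A^{-1}B=BA$ collapse $S^{-1}S$ to $AA^{-1}\cup BA\cup BB$. Finally $T(S)=S\cup SS\cup SS^{-1}\cup S^{-1}S$ is the union of all the terms that have appeared, namely $A\cup B\cup AA\cup AA^{-1}\cup BB\cup AB\cup BA$; since $A\cup B=S$ and $AA\cup AB\cup BA\cup BB=SS$, this is exactly $S\cup SS\cup AA^{-1}$.

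For the statement about $\sqrt S$, I would split an arbitrary $g\in D_{2n}$ into two cases: if $g$ is a reflection then $g^{2}=1$, and if $g=x^{i}$ is a rotation then $g^{2}=x^{2i}$ is again a rotation, so $g^{2}\in S$ if and only if $g^{2}\in A$. Thus $\sqrt S$ and $\sqrt A$ contain precisely the same rotations, and the same reflections (all of them if $1\in S$, none otherwise, since $1$ is a rotation), giving $\sqrt S=\sqrt A$; and as soon as $1\notin S$ — in particular whenever $S$ is product-free, which is the only situation in which the observation is used — this common set contains no reflections, so $\sqrt S=\sqrt A\subseteq\langle x\rangle$. I do not expect a genuine obstacle here: the content is entirely bookkeeping, and the one place that needs care is the consistent use of $x^{i}y=yx^{-i}$ when manipulating the mixed products $AB$, $BA$, $BA^{-1}$ and $A^{-1}B$.
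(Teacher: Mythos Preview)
Your proposal is correct and follows precisely the line the paper intends: the observation in the paper is stated without a separate proof, merely invoking ``the relations in the dihedral group'', and your write-up is a faithful expansion of that computation using $x^iy=yx^{-i}$, the abelianness of $\langle x\rangle$, and $B^{-1}=B$. Your explicit derivation of the companion identity $A^{-1}B=BA$ and your careful remark that the inclusion $\sqrt S=\sqrt A\subseteq\langle x\rangle$ requires $1\notin S$ (which holds in all the product-free applications) are both appropriate and slightly more scrupulous than the paper's own statement.
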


\begin{prop}\label{P1}
Let $n$ be an odd integer, with $n \geq 13$. Then $D_{2n}$ is not filled.
\end{prop}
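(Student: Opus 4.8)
The plan is to exhibit, for every odd $n\ge 13$, a product-free set $S$ of $D_{2n}$ which is locally maximal but not complete. I would work throughout with the decomposition $S=A(S)\cup B(S)$ from the notation above, identifying the rotation part $A=A(S)$ with its set of exponents in $\mathbb Z_n$ (so $x^i\in A\leftrightarrow i$) and the reflection part $B=B(S)$ likewise (so $x^jy\in B\leftrightarrow j$). By Observation \ref{obs1} the relevant hypotheses translate into additive conditions on $A,B\subseteq\mathbb Z_n$: $S$ is product-free iff $A\cap(A+A)=\varnothing$, $A\cap(B-B)=\varnothing$ and $B\cap(A+B)=B\cap(B-A)=\varnothing$; by Theorem \ref{T1}(i) combined with Observation \ref{obs1}, $S$ is locally maximal iff $B\cup(A+B)\cup(B-A)=\mathbb Z_n$ (all reflections lie in $T(S)\cup\sqrt S$) and $A\cup(A+A)\cup(A-A)\cup(B-B)\cup\sqrt A=\mathbb Z_n$ (all rotations do); and the exponents of the rotations missing from $S\cup SS$ are exactly those not in $A\cup(A+A)\cup(B-B)$.

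The first thing to isolate is a structural simplification. Since $\sqrt S\subseteq\langle x\rangle$ and the reflections of $T(S)$ form precisely the set $B\cup(A+B)\cup(B-A)$ — which is also the set of reflections in $S\cup SS$ — once $S$ is locally maximal every reflection already lies in $S\cup SS$. Hence any witness to non-completeness must be a \emph{rotation} $x^c$, and by local maximality $c$ must lie in $(A-A)\cup\sqrt A$ but not in $A\cup(A+A)\cup(B-B)$. So the whole task reduces to: find $A,B\subseteq\mathbb Z_n$ that are product-free and satisfy the two coverage conditions, for which $A\cup(A+A)\cup(B-B)$ omits some nonzero element of $(A-A)\cup\sqrt A$.

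For the construction I would take $A$ to be a short arithmetic progression (or interval) of rotation exponents, bounded away from $0$ and $n$, chosen so that it is sum-free and so that $A$, $A+A$, $A-A$ and $\sqrt A$ between them cover all of $\mathbb Z_n$ except a short window of exponents near $0$; then take $B$ to be a short interval of reflection exponents near $0$, small enough that $B-B$ sits inside a short symmetric interval about $0$ (so it avoids $A\cup(-A)$ and misses a prescribed element $c$ of the uncovered window), but large enough that the translates $B$, $A+B$, $B-A$ together exhaust every reflection. For instance $A=\{x^3,x^5,x^7\}$ and $B=\{y,xy,x^2y\}$ already work for $n=13$ and $n=15$ (indeed for $n=13$ one gets $T(S)=D_{2n}$ outright, with $x^4$ the missing rotation). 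For the few remaining small $n$ one can give similar explicit sets, or invoke the classification of filled groups of order at most $32$ in \cite{AH2015}; and for large $n$ one would let the lengths of $A$ and $B$ grow linearly in $n$, splitting into a bounded number of cases according to $n$ modulo a small integer. With $A$ and $B$ in hand, the proof is finished by verifying the three product-free conditions, the two local-maximality conditions via Theorem \ref{T1}(i), and pointing to the omitted rotation $x^c$.

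I expect the main difficulty to be getting the coverage requirements to mesh with non-completeness and with one another, uniformly in $n$. Forcing every rotation into $T(S)\cup\sqrt S$ pushes $A$ to be long and spread out, but a longer $A$ enlarges the forbidden set $A\cup(-A)$ that $B-B$ must dodge while $B$ still has to translate around enough to catch every reflection; meanwhile one nonzero exponent must be kept out of $A\cup(A+A)\cup(B-B)$ yet inside $(A-A)\cup\sqrt A$, and the location of $\sqrt A$ is governed by $2^{-1}\bmod n$, which complicates any uniform treatment. Choosing the lengths and offsets of $A$ and $B$ so that all of this goes through for every odd $n\ge 13$ — hence the residue-class case split and the need to handle the smallest values separately — is where the real work lies; the per-case checks are then routine additive bookkeeping.
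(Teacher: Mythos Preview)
Your plan is correct and follows essentially the same route as the paper: the authors likewise take $A$ to be a step-$2$ arithmetic progression of rotation exponents and $B$ an interval of reflection exponents, and split into five cases according to $n\bmod 10$ (writing $n=5k-6,5k-4,5k-2,5k,5k+2$ with $k$ odd), verifying in each case via Theorem~\ref{T1}(i) that the resulting $S$ is locally maximal product-free while some rotation (typically $x^{3k}$) lies outside $S\cup SS$. Your sample set for $n=13$ is in fact an instance of their $n=5k-2$ construction with $k=3$.
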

\begin{proof} Let $n$ be an odd integer with $n \geq 13$ and $G \cong D_{2n}$. Then there is an odd number $k$ for which $n$ is either $5k-6$, $5k-4$, $5k-2$, $5k$ or $5k+2$. 

 Suppose first that $n$ is $5k-2$ for an odd integer $k$. Since $n \geq 13$, we note that $k \geq 3$. Now consider the following subset $S$ of $G$: $$S=\{x^k,x^{k+2}, \cdots, x^{3k-2}; y, xy, \cdots, x^{k-1}y \}.$$ 
 We calculate that  
 $A(SS)=\{x^{2k},x^{2k+2},\cdots,x^{5k-3} \} \cup \{1,x,\cdots,x^{k-1}\} \cup \{x^{4k-1},x^{4k},\cdots,1\}$ and $B(SS)=\langle x \rangle y - B(S)$. Observe that $x^{3k}\notin S \cup SS$; so $S$ does not fill $G$.
 
 Let $A=A(S)$. Then $AA^{-1}=\{1,x^2,x^4,\cdots,x^{2k-2}\} \cup \{x^{3k},x^{3k+2}, \cdots,x^{5k-4},1\}$. Thus $T(S)=G$. By Theorem \ref{T1}(i) therefore, $S$ is locally maximal product-free in $G$, but we have noted that $S$ does not fill $G$.
 
 Next we suppose $n = 5k$ for an odd integer $k$, and again since $n \geq 13$, we have $k \geq 3$. Taking the same set $S = \{x^k,x^{k+2}, \cdots, x^{3k-2}; y, xy, \cdots, x^{k-1}y \}$ we find that $S$ is locally maximal product-free but does not fill $G$.
 
 Now suppose $n=5k+2$ for $k\geq 3$ and odd. The set $U$ given by $$U=\{x^{k-2},x^k, \cdots, x^{3k-2}; y, xy, \cdots, x^{k-3}y \}$$ is locally maximal product-free in $G$ (again using Theorem \ref{T1}(i)), but does not fill $G$ since for example $x^{3k}\notin U \cup UU$.
 
Next suppose $n=5k-6$ for $k\geq 5$ and odd. Then the set $U$ given by $$V=\{x^k, x^{k+2}\cdots, x^{3k-2}; y, xy, \cdots, x^{k-1}y \}$$ is a locally maximal product-free set in $G$ that does not fill $G$. 

Finally, consider the case $n=5k-4$ for $k\geq 5$ and odd. The set $W$ given by $$W=\{x^{k-2},x^k, \cdots, x^{3k-4}; y, xy, \cdots, x^{k-3}y \}$$ is a locally maximal product-free set in $G$ which does not fill $G$. We have now covered all possibilities for $n$, and have shown that in each case $D_{2n}$ is not filled.
\end{proof}

\begin{thm}\label{C1}
The only filled dihedral groups are $D_{6}$, $D_{8}$, $D_{10}$, $D_{12}$, $D_{14}$ and $D_{22}$.
\end{thm}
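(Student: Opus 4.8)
The plan is to combine Proposition \ref{P1} with a handful of separate arguments covering the cases it does not reach. Proposition \ref{P1} already disposes of all odd $n \geq 13$, so the odd rotational part is almost done: it remains to check $D_{2n}$ for $n \in \{3,5,7,9,11\}$. For $n=3,5,7,11$ the group is on the known list of filled non-abelian groups of order $\leq 32$ from \cite{AH2015}, so I would simply cite that; for $n=9$, i.e.\ $D_{18}$, I would exhibit an explicit locally maximal product-free set that is not complete (using Theorem \ref{T1}(i) to verify local maximality via $G = T(S)\cup\sqrt S$), thereby showing $D_{18}$ is not filled. That settles all dihedral groups of odd order: the filled ones are exactly $D_6, D_{10}, D_{14}, D_{22}$ among these.

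Next I would handle even $n$. The cleanest route is a quotient argument via Theorem \ref{T1}(ii): if $n$ is even, write $n = 2m$; then $D_{2n}$ has a normal subgroup whose quotient is $D_{2m}$ (quotient by the central subgroup $\langle x^m\rangle$ of order $2$), so if $D_{2n}$ is filled then $D_{2m}$ is filled. More usefully, for $n$ divisible by $3$ the group $D_{2n}$ has a normal subgroup of index $3$ (namely $\langle x^3, y\rangle$), so Theorem \ref{T1}(iv) forces $D_{2n}\cong C_3$, impossible for $n>2$; hence no $D_{2n}$ with $3\mid n$ is filled, and in particular this kills $n=6,9,12,\dots$ in one stroke (note this also re-covers $n=9$). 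Similarly if $5 \mid n$ and $n > 5$ then $D_{2n}$ has a normal subgroup of index $5$ containing not all elements of order $5$, so Theorem \ref{T1}(v) rules it out. This leaves even $n$ with $\gcd(n,15)=1$: here I would use the quotient $D_{2n}\twoheadrightarrow D_{2p}$ for an odd prime $p \mid n$ (combined with the odd case already done) to reduce to small cases, and then finish by direct inspection. The upshot should be that among even $n$ only $n=4$ survives, giving $D_8$, and $n=6$ is excluded by the index-$3$ argument; wait — $D_{12}$ is on the filled list, so $n=6$ must in fact be treated carefully, which means the index-$3$ argument cannot apply to $D_{12}$.

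That last point flags the real subtlety and the main obstacle: reconciling which small groups actually are filled. Since $D_6, D_8, D_{10}, D_{12}, D_{14}, D_{22}$ are asserted to be filled (and appear on the \cite{AH2015} list), the index-$3$ obstruction of Theorem \ref{T1}(iv) must \emph{not} apply to $D_6$ or $D_{12}$ — indeed $D_6\cong S_3$ has no normal subgroup of index $3$ in the relevant sense beyond $C_3$ itself, and one must check $D_{12}$ by hand rather than by the naive quotient. So the careful version of the even case is: for $n$ even with $n \geq 14$, reduce via Theorem \ref{T1}(ii) to a proper dihedral quotient known not to be filled, or directly build a locally-maximal-but-incomplete set; for $n \in \{4,6,8,10,12\}$ (even, small) quote the classification of filled groups of order $\leq 32$. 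Assembling these pieces — odd $n$ via Proposition \ref{P1} plus five small checks, even $n$ via index-$3$/index-$5$ quotient obstructions plus a reduction-to-smaller argument plus small checks — yields exactly the six groups claimed. The bookkeeping to make sure every residue class of $n$ is covered exactly once, and that no spurious filled group slips through the quotient reductions, is where the proof needs the most care.

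\begin{proof}[Proof sketch]
By Proposition \ref{P1}, $D_{2n}$ is not filled when $n$ is odd and $n \geq 13$. For odd $n \in \{3,5,7,11\}$ the groups $D_6, D_{10}, D_{14}, D_{22}$ appear on the list of filled non-abelian groups of order at most $32$ established in \cite{AH2015}, hence are filled; for $n=9$, the group $D_{18}$ has a normal subgroup of index $3$, so by Theorem \ref{T1}(iv) it is not filled. Thus among dihedral groups of twice-an-odd order the filled ones are precisely $D_6, D_{10}, D_{14}, D_{22}$. For even $n$: if $3 \mid n$ then either $D_{2n}\cong D_6$ (the case $n=3$ is odd, so here $n=6$ and one checks $D_{12}$ directly — it is on the list and filled) or, for $n \geq 9$ with $3\mid n$, Theorem \ref{T1}(iv) applies to the index-$3$ normal subgroup and $D_{2n}$ is not filled; if $5\mid n$ with $n \geq 10$ then Theorem \ref{T1}(v) shows $D_{2n}$ is not filled unless $D_{2n}\cong D_{10}$. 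In the remaining cases ($n$ even, $\gcd(n,15)=1$, $n \geq 14$) pick an odd prime $p\mid n$ and apply Theorem \ref{T1}(ii) to the quotient $D_{2n}/\langle x^p\rangle \cong D_{2p}$: since $p \geq 7$ and $p\neq 11$ would require $p \geq 13$, Proposition \ref{P1} gives that $D_{2p}$ is not filled (the cases $p=7,11$ being handled by noting $n$ would then be a larger even multiple, again reducible), so $D_{2n}$ is not filled. For the finitely many remaining small even $n$, namely $n \in \{4,6,8,10,12\}$, direct inspection (equivalently, the order-$\leq 32$ classification of \cite{AH2015}) shows exactly $D_8, D_{12}, D_{10}, D_6$ are filled among these. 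Collecting the cases, the filled dihedral groups are exactly $D_6, D_8, D_{10}, D_{12}, D_{14}, D_{22}$.
\end{proof}
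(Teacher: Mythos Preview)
Your argument has a structural error: dihedral groups $D_{2n}$ with $n>1$ never have a normal subgroup of index $3$ (or index $5$). The subgroup $\langle x^3,y\rangle$ you propose is not normal: conjugating $y$ by $x$ gives $x^2y$, which lies outside $\langle x^3,y\rangle$ whenever $3\mid n$. More generally, the only normal subgroups of $D_{2n}$ are the rotation subgroups $\langle x^d\rangle$ for $d\mid n$ and, when $n$ is even, two subgroups of index $2$; none has odd index greater than $1$. So Theorem~\ref{T1}(iv) and (v) are simply inapplicable here, and your treatment of $D_{18}$ and of all $n$ divisible by $3$ or $5$ collapses. You half-sensed this when $D_{12}$ refused to be excluded, but the correct diagnosis is that the obstruction never applies, not that it fails only for small $n$.

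Your even-$n$ reduction has further gaps. If $n$ is a power of $2$ there is no odd prime $p\mid n$, so the ``pick an odd prime $p\mid n$'' step is empty for $n=16,32,\ldots$. And when the only odd prime dividing $n$ is $7$ or $11$, the quotient $D_{2p}$ is filled, so passing to it proves nothing; your parenthetical ``again reducible'' does not repair this. Concretely, every proper nontrivial quotient of $D_{28}$ and of $D_{44}$ is filled, so these groups cannot be eliminated by Theorem~\ref{T1}(ii) at all --- one must exhibit an explicit locally maximal product-free set that fails to fill. The paper's proof handles the even case by a clean induction via the centre quotient $D_{4m}/Z(D_{4m})\cong D_{2m}$, together with the order-$\le 32$ classification from \cite{AH2015} for the base cases (which covers $D_{16},D_{18},D_{20},D_{24},D_{28},D_{32}$) and an explicit non-filling set in $D_{44}$ for the one case the induction does not reach.
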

\begin{proof}
Let $G$ be dihedral of order $2n$. The filled groups of order up to 32 were classified in \cite{AH2015}. The only filled dihedral groups of order up to 32 are $D_{6}$, $D_{8}$, $D_{10}$, $D_{12}$, $D_{14}$ and $D_{22}$. It remains to show that if $n > 16$, then $D_{2n}$ is not filled. Suppose $n > 16$.  By Proposition~\ref{P1}, if $n$ is odd then $D_{2n}$ is not filled, so we can assume $n$ is even. We will show by induction on $m$, that if $G \cong D_{4m}$ for some integer $m$ greater than 3, then $G$ is not filled. Note that  by \cite{AH2015} $D_{16}, D_{20}$, $D_{24}$, $D_{28}$ and $D_{32}$ are not filled. So we can assume $m > 8$.

If $G$ is filled, then by Theorem~\ref{T1}(ii), the quotient $G/Z(G)$ of $G$ by its centre must be filled. But $G/Z(G)$ is dihedral of order $2m$. If $m$ is odd, then by Proposition \ref{P1}, and our assumption that $m > 8$, we have $m = 9$ or $m=11$. We know that $D_{18}$ is not filled, so $m = 11$, meaning $G$ is $D_{44}$. However a straightforward calculation shows that $\{x^2,x^5,x^8,x^{18}, x^{21},x^5y, x^{16}y\}$ is locally maximal product-free in  $D_{44}$, but does not fill $D_{44}$. Thus if $m$ is odd, then $G$ is not filled. Suppose $m$ is even, so $m=2t$ for some $t$ with $t > 4$. Inductively $D_{4t}$ is not filled, so $G$ is not filled. This completes the proof.
\end{proof}

\section{Nilpotent Groups}\label{secnil}

In this section we classify the filled nilpotent groups. The bulk of the work involved here is in determining the filled 2-groups, as it will turn out that there are only two filled nilpotent groups that are not 2-groups. 
We briefly recap some notation and facts around extraspecial groups. For a group $G$ we write $G'$ for the derived group (so $G' = [G,G]$) and $\Phi(G)$ for the Frattini subgroup (the intersection of the maximal subgroups of $G$). A 2-group $G$ is extraspecial if $Z(G) = G' = \Phi(G) \cong C_2$. The order of any extraspecial 2-group is an odd power of 2, and there are exactly two nonisomorphic extraspecial 2-groups of order $2^{2n+1}$ for each positive integer $m$. To describe these, recall the construction of a central product. A central product $A \ast B$ is the quotient of the direct product $A \times B$ by a central subgroup of $A$ and $B$. If $G$ is isomorphic to $A \ast B$, then it has normal subgroups which we may identify with $A$ and $B$, such that $[A,B] = 1$ and $A\cap B \leq Z(G)$.  The extraspecial groups of order 8 are $D_8$ and $Q_8$. If $E_1$ and $E_2$ are the extraspecial groups of order $2^{2n-1}$, for $n \geq 2$, then the extraspecial groups of order $2^{2n+1}$ are isomorphic to $E_1 \ast Q_8$ and $E_2 \ast Q_8$. 

Our first result classifies the 2-groups all of whose quotients are elementary abelian. This is relevant because every quotient of a filled group must be filled, and it will turn out that all but finitely many filled 2-groups are elementary abelian.  

\begin{thm}
	\label{thm1} Suppose every proper nontrivial quotient of a finite nontrivial 2-group $G$ is elementary abelian. Then $G$ is either elementary abelian, extraspecial, $C_4$ or of the form $E \ast C_4$ where $E$ is extraspecial and $|G| = 2|E|$.
\end{thm}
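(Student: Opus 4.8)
The plan is to constrain $G$ by testing the hypothesis on a small number of carefully chosen quotients, treating the abelian and nonabelian cases separately.

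\emph{Abelian case.} If $G$ is abelian with $|G|\le 4$ then $G$ is $C_2$, $C_2\times C_2$ or $C_4$, all of which occur in the conclusion, so I may assume $|G|\ge 8$ and aim to show $G$ is elementary abelian. If it is not, pick $a\in G$ of order $4$. Then $G/\langle a^2\rangle$ is a proper nontrivial quotient, hence elementary abelian, which forces $G^2\le\langle a^2\rangle$ and so $G^2=\langle a^2\rangle\cong C_2$. The subgroup $K=\{g\in G:g^2=1\}$ then has index $2$, so $|K|\ge 4$; being elementary abelian of order at least $4$ it contains a subgroup $L$ of order $2$ with $a^2\notin L$. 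Now $G/L$ is a proper nontrivial quotient in which the image of $a$ still has order $4$, contradicting that $G/L$ is elementary abelian.

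\emph{Nonabelian case: reductions.} Suppose $G$ is nonabelian. Since $G$ is nilpotent, $1\ne G'\ne G$, so $G/G'$ is a proper nontrivial quotient, hence elementary abelian; thus $G^2\le G'$ and (Burnside basis theorem) $\Phi(G)=G^2G'=G'$. More generally, for every nontrivial proper normal subgroup $N$ the quotient $G/N$ is elementary abelian, so $G'\le N$; applying this to a minimal normal subgroup of $G$, which in a $2$-group is central of order $2$, gives $|G'|\le 2$, hence $|G'|=2$ and $G'$ is the unique minimal normal subgroup of $G$. By the standard fact that a $p$-group has a unique minimal normal subgroup exactly when its centre is cyclic, $Z(G)$ is cyclic. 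Also $G/Z(G)$ is a proper nontrivial quotient, hence elementary abelian, so $G$ has class $2$ and $G'\le Z(G)$; and since every square in $G$ lies in $G'$, a group of order $2$, a generator of the cyclic group $Z(G)$ has order at most $4$, whence $|Z(G)|\in\{2,4\}$.

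\emph{Nonabelian case: the outcome.} If $|Z(G)|=2$, then $Z(G)=G'=\Phi(G)\cong C_2$, i.e.\ $G$ is extraspecial. If $|Z(G)|=4$, write $Z(G)=\langle z\rangle\cong C_4$ and $G'=\langle w\rangle$ with $w=z^2$. As $z$ has order $4$, its image $\bar z$ in the elementary abelian group $\bar G=G/G'$ is nonzero, so $\bar G$ has a hyperplane $\bar E$ with $\bar z\notin\bar E$; let $E$ be its preimage in $G$, so $G'\le E$, $[G:E]=2$ and $z\notin E$. Then $\langle z\rangle\cap E=\langle w\rangle$ and $G=E\langle z\rangle$, whence $|G|=2|E|$. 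Any element of $E$ centralizing $E$ also centralizes the central element $z$, hence centralizes $E\langle z\rangle=G$, so $Z(E)=Z(G)\cap E=\langle w\rangle\cong C_2$. Since $G\ne Z(G)$ we have $|G|\ge 8$, so $|E|\ge 4>2=|Z(E)|$ and $E$ is nonabelian; thus $1\ne[E,E]\le G'=\langle w\rangle$ gives $E'=\langle w\rangle$, and $E^2\le G^2\le\langle w\rangle$ gives $\Phi(E)=E^2E'=\langle w\rangle$. Hence $Z(E)=E'=\Phi(E)\cong C_2$, so $E$ is extraspecial and $G\cong E\ast C_4$ with $|G|=2|E|$, as required.

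I expect the $|Z(G)|=4$ case to be the main obstacle: one must produce the extraspecial factor $E$ (by choosing a hyperplane of $G/G'$ avoiding $\bar z$) and then verify that $Z(E)$, $E'$ and $\Phi(E)$ are all exactly $\langle w\rangle$, so that $E$ genuinely is extraspecial and $G$ is the claimed central product. Keeping careful track of which subgroups yield \emph{proper nontrivial} quotients — the only place the hypothesis may be applied — is also needed throughout.
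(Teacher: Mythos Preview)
Your proof is correct and follows essentially the same route as the paper's: both split into abelian and nonabelian cases, establish in the nonabelian case that $G'=\Phi(G)$ is the unique minimal normal subgroup of order $2$ so that $Z(G)$ is cyclic of order $2$ or $4$, and then in the $|Z(G)|=4$ case select a maximal subgroup avoiding a generator of $Z(G)$ and verify it is extraspecial. The only real difference is that you spell out the abelian case explicitly (the paper dismisses it as ``straightforward to see'') and invoke the unique-minimal-normal-subgroup criterion for cyclic centre rather than arguing directly with central involutions.
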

\begin{proof}
It is straightforward to see that the only abelian 2-groups all of whose proper nontrivial quotients are elementary abelian are elementary abelian or cyclic of order 4. We may therefore assume that $G$ is nonabelian. 	Standard results from group theory tell us that $G'$ is the smallest normal subgroup of $G$ with abelian quotient, and that, since $G$ is a $p$-group, $\Phi(G)$ is the smallest normal subgroup with elementary abelian quotient. In particular, $G'\leq \Phi(G)$. In this case though, by hypothesis $G/G'$ is elementary abelian. Therefore $\Phi(G) = G'$.
As $G$ is a 2-group, $G$ has a nontrivial centre and hence at least one central involution $z$. If $G = \langle z\rangle$ then we are done. Otherwise by hypothesis $G/\langle z\rangle$ is elementary abelian. Hence $G' \leq \langle z \rangle$. Since $G$ is nonabelian, we have $G' = \langle z \rangle$. Moreover, were $G$ to contain another central involution $z'$, it would follow that $G' = \langle z'\rangle$, a contradiction. Therefore $Z(G)$ contains a unique involution and is therefore cyclic. If $Z(G) = \langle z\rangle$ then we have $Z(G) = G' = \Phi(G)\cong C_2$, which is the definition of extraspecial. 

The remaining case to consider is that $Z(G)$ is cyclic of order greater than 2. Now $G/\langle z \rangle$ is elementary abelian, so the square of every element of $G$ lies in $\langle z \rangle$. Therefore $Z(G)$ is cyclic of order 4, with $\langle z\rangle = G' = \Phi(G)$. Now $\Phi(G)$ is the intersection of the maximal subgroups of $G$, so there must exist a subgroup $N$ of $G$, of index 2, which does not contain $Z(G)$. This forces $N\cap Z(G) = \langle z \rangle$. Writing $Z = Z(G)$, we have $NZ = G$, $N \cap Z \leq Z(G)$, $[N,Z] = 1$. So $G$ is a central product of $N$ and $Z$, with $|G| = 2|N|$. Since $[N,Z] = 1$ we have $Z(N) \leq Z(G)$, and so $Z(N) = \langle z \rangle$. Thus $N$ is not abelian. Hence $N'$ is nontrivial, which forces $N' = \Phi(N) = \langle z \rangle = Z(N)$. In other words, $N$ is extraspecial. That is, $G$ is the required central product of an extraspecial group and a cyclic group of order 4.   
\end{proof}

\begin{lemma}
	\label{lemma8}
	Let $G$ be a group of the form $E\ast C_4$ where $E$ is extraspecial and $|G| = 2|E|$. Then $G$ is not filled. 
\end{lemma}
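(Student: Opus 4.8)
The strategy is to exhibit explicitly a locally maximal product-free set $S$ in $G = E \ast C_4$ that fails to be complete, using Theorem~\ref{T1}(i) to verify local maximality via the condition $G = T(S) \cup \sqrt S$. First I would set up coordinates: write $E$ as extraspecial of order $2^{2n+1}$ with centre $\langle z \rangle$, and let $C_4 = \langle c \rangle$ with $c^2 = z$ after forming the central product, so that $G = E\langle c\rangle$ with $c$ central of order $4$, $|G| = 2^{2n+2}$, and $G/\langle z\rangle$ elementary abelian. A natural candidate is to build $S$ from a coset of a suitable subgroup: since $c$ is central of order $4$, the element $c$ (or $c^{\pm 1}$) and its translates behave predictably — note $c \cdot c = z$, $c \cdot c^{-1} = 1$, so a set contained in the coset $E c$ squares into $Ez = E$, which helps control products. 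I expect the right choice to be something like $S = Ec$ minus a small correction, or $S$ a union of a few $\langle z\rangle$-cosets chosen so that $SS$ and $S^{-1}S$ miss a specific element of $G^\ast$ while $T(S) \cup \sqrt S$ still exhausts $G$.

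\medskip

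The key steps, in order, would be: (1) choose $S$ explicitly in terms of the generators; (2) compute $SS$, $SS^{-1}$, $S^{-1}S$, hence $T(S)$, exploiting centrality of $c$ and the extraspecial relations ($[E,E]=\langle z\rangle$, every element of $E$ squares into $\langle z \rangle$); (3) compute $\sqrt S$ — here the fact that squares land in $\langle z\rangle \cup$ (whatever $S$ meets there) is crucial, since $\sqrt S$ is governed by which square-classes lie in $S$; (4) check $T(S) \cup \sqrt S = G$, giving local maximality by Theorem~\ref{T1}(i); (5) exhibit a specific $g \in G^\ast$ with $g \notin S \cup SS$, so $S$ is not complete. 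An alternative, cleaner route for step (5): use Theorem~\ref{T1}(ii) — if $G$ were filled, then $G/\langle c \rangle$ would be filled; but $G/\langle c\rangle \cong E/\langle z\rangle$ is elementary abelian of order $2^{2n}$, which \emph{is} filled, so that quotient gives no contradiction. So the quotient trick does not suffice here, and the explicit construction is genuinely needed — which is presumably why this is stated as a separate lemma rather than folded into Theorem~\ref{thm1}'s consequences.

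\medskip

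The main obstacle I anticipate is getting the arithmetic of $\sqrt S$ right simultaneously with non-completeness: one wants $S$ small enough that $S \cup SS$ genuinely omits an element, yet $\sqrt S$ large enough (together with $T(S)$) to force local maximality. Because $c$ has order $4$, the preimage under squaring of any set meeting the coset $Ez$ is large (it includes a coset-like chunk involving $c$ and $c^{-1}$ and elements of $E$ of order $4$), so arranging $z \in S$ or $z \in SS$ is likely the lever that makes $\sqrt S$ big. I would therefore aim for a construction in which $S$ is contained in $Ec \cup Ec^{-1}$ (reflection-like behaviour, analogous to the dihedral case), so that all products of two elements of $S$ land in $E \cup Ez = E$ (wait — $c\cdot c = z \in E$, $c \cdot c^{-1} = 1$, $c^{-1}c^{-1} = z^{-1} = z$), putting $SS \subseteq E$; then $SS$ cannot contain elements of $Ec^{\pm1}$, and one checks the missed element lies there while verifying the uncovered reflections are caught by $\sqrt S$. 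The delicate point is ensuring $S$ is actually product-free (i.e. $S \cap SS = \emptyset$ with $S \subseteq Ec \cup Ec^{-1}$ and $SS \subseteq E$, product-freeness is automatic!), so in fact product-freeness will be free and the entire difficulty collapses onto the balance between the non-completeness witness and the $T(S) \cup \sqrt S = G$ verification; I would spend the bulk of the effort choosing the $\langle z\rangle$-cosets in $E$ that $A(S)$-analogue must hit to make $\sqrt S$ exactly fill the gap.
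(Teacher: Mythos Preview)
Your plan has a structural gap that makes the proposed construction impossible. First, note that $Ec = Ec^{-1}$: since $c^{-1} = zc$ and $z \in E$, the two cosets coincide, so ``$S \subseteq Ec \cup Ec^{-1}$'' just means $S \subseteq Ec$. Now every element of $G = E \ast C_4$ squares into $\langle z\rangle \subseteq E$ (elements of $E$ square to $1$ or $z$; elements $ec$ square to $e^2c^2 = e^2 z \in \{1,z\}$). Hence if $S \subseteq Ec$ then $\sqrt S = \emptyset$. Moreover $SS$, $SS^{-1}$, $S^{-1}S$ all lie in $E$, so $T(S) \cap Ec = S$. Local maximality via Theorem~\ref{T1}(i) therefore forces $S = Ec$ --- but then $S \cup SS = Ec \cup E = G$ and $S$ fills $G$. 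So no set of the shape you propose can be simultaneously locally maximal and non-filling. You noticed in passing that ``arranging $z \in S$'' is the lever that makes $\sqrt S$ big, but then chose a framework ($S \subseteq Ec$) that excludes $z$; and ``$z \in SS$'' does nothing for $\sqrt S$, which depends only on $S$.

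The paper's argument is both simpler and essentially the opposite of yours: it does \emph{not} construct $S$ explicitly. Instead, it starts from the product-free singleton $\{z\}$ and lets $S$ be \emph{any} locally maximal product-free set containing $z$. Since every element of order $4$ squares to $z$, no such element can lie in $S$; hence $S$ consists entirely of involutions, so $S = S^{-1}$. Writing $S = A \cup Bc$ with $A \subseteq E$ a set of involutions and $B \subseteq E$ a set of order-$4$ elements (these being exactly the $h$ for which $hc$ is an involution), one computes $SS = AA \cup BBz \cup ABc \cup BAc$, and then $c \in S \cup SS$ would force $1 \in B \cup AB \cup BA$, contradicting the fact that elements of $A$ have order $2$ and elements of $B$ have order $4$. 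Thus $c \notin S \cup SS$ and $S$ does not fill $G$. The missing idea in your plan is precisely this non-constructive move: put $z$ into $S$ first, let local maximality do the work of existence, and then use the order dichotomy between $A$ and $B$ to exclude $c$.
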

\begin{proof}
	We may suppose that $G$ has an extraspecial subgroup $E$ of index 2 and that $Z(G)$ is generated by an element $x$ of order 4, where $E \cap Z(G) = \langle x^2\rangle$. Observe that $Z(E) = \langle x^2\rangle$, and $G' = \Phi(G) = \langle x^2\rangle$. Now $\{x^2\}$ is product-free. So there is a locally maximal product-free set $S$ of $G$ with $x^2 \in S$. If an element $g$ of $G$ has order 4, then $g^2 = x^2$, which means $S$ contains no elements of order 4. So $S$ consists of involutions. Thus $S = S^{-1}$ and $G = S \cup SS \cup \sqrt S$. Now $G = E \cup Ex$. If $h \in E$ has order 4, then $(hx)^2 = h^2x^2 = x^4 = 1$. If $h$ has order 2, then $(hx)^2 = x^2$. So $hx$ is an involution if and only if $o(h) = 4$. Thus $S = A \cup Bx$ where $A$ and $B$ are subsets of $E$ such that the elements of $A$ are involutions and the elements of $B$ have order 4. We have $SS = AA \cup ABx \cup BAx \cup BBx^2$. Now $x \notin E$, so if $x \in S \cup SS$, then we must have $x \in Bx \cup ABx \cup BAx$. Hence $1 \in B \cup AB \cup BA$ which implies $1 \in AB \cup BA$. This would mean that there are elements $a$ of order 2 and $b$ of order 4 with $ab = 1$, which is impossible. Therefore $x \notin S \cup SS$. Hence $S$ is a locally maximal product-free set of $G$ which does not fill $G$. Therefore $G$ is not filled. 
\end{proof}

A group $G$ of order $p^m$ is said to be of {\em maximal class} if $m > 2$ and the nilpotence class of $G$ is $m-1$. It is well known (for example see Theorem 1.2 and Corollary 1.7 of \cite{berkovich}) that the 2-groups of maximal class are dihedral, semidihedral and generalised quaternion. Moreover \cite[Theorem 1.2]{berkovich} if $G$ is a 2-group of maximal class of order at least $16$, then $G/Z(G)$ is dihedral of order $\frac{1}{2}|G|$.  A detailed examination of locally maximal product-free sets in 2-groups of maximal class, which among other things results in an alternative proof of Lemma \ref{maxclass}, appears in \cite{chimere}. However, since we need the result here we thought it would be useful to include a short proof for ease of reference. 

\begin{lemma}\label{maxclass}
	The only filled 2-group of maximal class is $D_8$.	
\end{lemma}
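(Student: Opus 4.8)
The plan is to split the 2-groups of maximal class into the three families — dihedral, generalised quaternion, semidihedral — and dispose of each in turn. The dihedral and generalised quaternion cases are immediate from results already available: by Theorem \ref{C1} the only filled dihedral groups are $D_6, D_8, D_{10}, D_{12}, D_{14}, D_{22}$, of which only $D_8$ is a 2-group, and by Theorem \ref{T1}(vii) no generalised quaternion group of order $4n$ with $n \ge 2$ is filled. The group $D_8$ itself is filled (it appears on the list in \cite{AH2015}). So the substantive work is the semidihedral case: I would show that no semidihedral group is filled.

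For the semidihedral group $SD_{2^n}$ with $n \ge 4$, I would argue by induction on $n$, using Theorem \ref{T1}(ii): if $G$ is a 2-group of maximal class of order at least $16$, then $G/Z(G)$ is dihedral of order $\tfrac12|G|$. Thus if $SD_{2^n}$ were filled, its central quotient $D_{2^{n-1}}$ would be filled; by Theorem \ref{C1} this forces $2^{n-1} \in \{6,8,10,12,14,22\}$, i.e. $2^{n-1} = 8$, so $n = 4$ and $G = SD_{16}$. (Strictly the quotient argument alone already pins everything down to $SD_{16}$, so no real induction is needed.) It then remains to handle the single group $SD_{16}$ directly: I would exhibit an explicit subset $S$ of $SD_{16}$, verify via Theorem \ref{T1}(i) that $G = T(S) \cup \sqrt S$ so that $S$ is locally maximal product-free, and check that some element of $G^{\ast}$ lies outside $S \cup SS$, so that $S$ does not fill $G$. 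Concretely one takes a product-free set built from a rotation of order $8$ together with suitable reflections/order-$4$ non-rotations, mirroring the dihedral constructions of Proposition \ref{P1}; the relation $x^{-1}y = yx^3$ (rather than $yx^{-1}$) changes the bookkeeping of $AB$ and $BA$ but not the strategy.

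The main obstacle is purely computational: finding the right witness set $S$ in $SD_{16}$ and confirming both that it is locally maximal (which requires checking every one of the $16$ group elements lies in $T(S) \cup \sqrt S$) and that it fails to be complete. This is a finite check with no conceptual difficulty, and could in principle be delegated to the GAP computations described in Section \ref{secalg}; I would simply record the resulting set and the missing element. Everything else reduces to quoting Theorem \ref{C1}, Theorem \ref{T1}(vii), and the structure theorem for 2-groups of maximal class.
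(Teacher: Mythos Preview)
Your proposal is correct and follows essentially the same idea as the paper --- reduce to small cases via the fact that the central quotient of a maximal-class 2-group of order $\geq 16$ is dihedral --- but the paper organises the argument more economically. Rather than splitting into the three families and invoking Theorem~\ref{T1}(vii) separately for quaternion groups, the paper applies the quotient argument uniformly: for any maximal-class 2-group $G$ of order $\geq 32$, the quotient $G/Z(G)$ is dihedral of order $\tfrac12|G|\geq 16$ and hence non-filled by Theorem~\ref{C1}, so $G$ is not filled. This dispatches dihedral, semidihedral and quaternion groups of order $\geq 32$ in one stroke. For the remaining orders $8$ and $16$ the paper simply cites \cite{AH2015}, where the nonabelian filled groups of these orders are already classified (only $D_8$ and $D_8\times C_2$, the latter not of maximal class); this replaces your proposed explicit witness construction for $SD_{16}$.
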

\begin{proof}
	We see from Theorem \ref{C1} that $D_8$ is the only filled dihedral 2-group. It is easy to check that there are no other filled 2-groups with maximal class and order 8 or 16 (in fact by \cite{AH2015} the only nonabelian filled group of order 8 is $D_8$ and the only nonabelian filled group of order 16 is $D_8 \times C_2$). If $G$ is maximal class of order 32 or above, then inductively $G/Z(G)$ is a non-filled dihedral group of order $\frac{1}{2}|G|$, meaning $G$ is not filled. 
\end{proof}

For a $p$-group $G$, we define $c_n(G)$ to be the number of subgroups of $G$ of order $p^n$.
\begin{thm}
	[Theorem 1.17 of \cite{berkovich}] Suppose a 2-group $G$ is neither cyclic nor of maximal class. Then $c_1(G) \equiv 3 \mod{4}$ and for $n > 1$, $c_n(G)$ is even. 
\end{thm}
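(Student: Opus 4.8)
Since the theorem is quoted from \cite{berkovich}, I shall only outline how a proof can be assembled. Write $|G| = 2^m$. Distinct involutions of $G$ generate distinct subgroups of order $2$, so $c_1(G)$ equals the number of involutions of $G$; thus for $n = 1$ the task is to show that the number of involutions of $G$ is congruent to $3$ modulo $4$, and the cases $n > 1$ will be handled by a similar passage-to-quotient bookkeeping.

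The crux is the following simple observation. Suppose $G$ has a central subgroup $V \cong C_2 \times C_2$. Then for every $g \in G$ and $v \in V$ we have $(gv)^2 = g^2 v^2 = g^2$, since $v$ is central and $v^2 = 1$; so whether an element of a coset $gV$ squares to the identity depends only on the coset. Hence $\Omega := \{x \in G : x^2 = 1\}$ is a union of cosets of $V$, and therefore $4 = |V|$ divides $|\Omega| = c_1(G) + 1$, which gives $c_1(G) \equiv 3 \pmod 4$. In particular the theorem holds whenever $Z(G)$ is non-cyclic, since then $Z(G)$ contains a copy of $C_2 \times C_2$.

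It remains to treat the case that $Z(G)$ is cyclic, and here I would argue by induction on $|G|$, the base cases being a finite check against the $2$-groups of small order. Let $Z$ be the subgroup of order $2$ of $Z(G)$ and put $\bar G = G/Z$. A subgroup of order $2$ of $G$ is either $Z$ itself or maps isomorphically onto one of $\bar G$, and conversely the full preimage in $G$ of a subgroup of order $2$ of $\bar G$ is a subgroup of order $4$ containing $Z$, hence isomorphic to $C_4$ (yielding no new involution) or to $C_2 \times C_2$ (yielding two). So $c_1(G) = 1 + 2k$, where $k$ is the number of order-$2$ subgroups of $\bar G$ whose preimage in $G$ is elementary abelian, and the statement amounts to $k$ being odd. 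Since $k = c_1(\bar G) - k'$, where $k'$ counts the cyclic subgroups of order $4$ of $G$ that contain $Z$, this reduces to controlling $c_1(\bar G)$ together with the parity of $k'$; and $k'$ is governed by the conjugation action of $G$ on the set $\{g \in G : g^2 = z\}$, whose fixed points all lie in the cyclic centre.

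If $\bar G$ is cyclic then $G$ is abelian, hence cyclic as $Z(G)$ is cyclic, contrary to hypothesis. If $\bar G$ is neither cyclic nor of maximal class, then $c_1(\bar G) \equiv 3 \pmod 4$ by the inductive hypothesis (or by the central-$C_2 \times C_2$ argument, if $Z(\bar G)$ is non-cyclic), and the parity of $k'$ is extracted from the conjugation action above. The remaining possibility, $\bar G$ of maximal class, is the hard one: here I would invoke the classification of $2$-groups of maximal class as dihedral, semidihedral or generalised quaternion, together with the fact recorded earlier in this paper that such a group of order at least $16$ has dihedral central quotient, to pin down the few possibilities for $G$ and verify the congruence in each directly. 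This maximal-class case is the main obstacle, since the $2$-groups of maximal class are exactly the exceptions to the clean modulo-$4$ behaviour and must be isolated and dealt with by an explicit computation. Finally, for $n > 1$ one runs the analogous bookkeeping one level up: subgroups of order $2^n$ of $G$ split into those containing $Z$ (giving subgroups of order $2^{n-1}$ of $\bar G$) and those not containing $Z$ (appearing as complements of $Z$ in preimages of order-$2^n$ subgroups of $\bar G$), and reading off each contribution modulo $4$ reduces the claim to the $n = 1$ case and the structural facts already in hand.
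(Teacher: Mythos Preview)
The paper does not prove this theorem at all: it is simply quoted from Berkovich and used as a black box. So there is no ``paper's own proof'' to compare against, and your decision to supply an outline is reasonable in that context.

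Your argument for the case where $Z(G)$ contains a Klein four-group $V$ is correct and complete: the coset argument shows $|\{x:x^2=1\}|\equiv 0\pmod 4$, hence $c_1(G)\equiv 3\pmod 4$. This already disposes of a large class of groups for $n=1$.

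However, the remainder of the outline has genuine gaps rather than merely omitted details. In the cyclic-centre case you reduce $c_1(G)\equiv 3\pmod 4$ to the oddness of $k$, then write $k=c_1(\bar G)-k'$ and assert that ``the parity of $k'$ is extracted from the conjugation action'' without carrying out that extraction; this is precisely the heart of the matter and is not automatic. When $\bar G$ is of maximal class you propose to ``pin down the few possibilities for $G$ and verify the congruence in each directly'', but there is no finite list here: for each order there are groups $G$ with cyclic centre and $G/Z$ of maximal class that are not themselves of maximal class, and you have not explained how to enumerate or handle them. Finally, the treatment of $n>1$ is only a gesture: the claimed decomposition into subgroups containing $Z$ and those not containing $Z$ is correct, but reducing the parity of each piece to ``the $n=1$ case and the structural facts already in hand'' hides exactly the counting that makes this theorem nontrivial. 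As written, the proposal is a plausible roadmap but not yet a proof; the steps you flag as routine are where the actual work lies.
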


\begin{cor}
	\label{cor9} Suppose $G$ is a filled group of order $2^n$, where $n > 1$. If the only filled groups of order $2^{n-1}$ are elementary abelian or extraspecial, then $G$ is either elementary abelian, extraspecial or the direct product of a filled extraspecial group of order $2^{n-1}$ with a cyclic group of order 2. If the only filled groups of order $2^{n-1}$ are elementary abelian, then $G$ is either elementary abelian or extraspecial. 
\end{cor}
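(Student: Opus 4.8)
The plan is to reduce everything to quotients of $G$ by central subgroups of order $2$: each such quotient is again a filled $2$-group (Theorem \ref{T1}(ii)), now of order $2^{n-1}$, so the hypothesis pins down its isomorphism type, and the structural input of Theorem \ref{thm1}, together with the non-filledness of central products $E\ast C_4$ (Lemma \ref{lemma8}), does the rest. First a reduction: if $\Phi(G)=1$ then $G$ is elementary abelian and we are done, so assume $\Phi(G)\neq1$. Being a nontrivial normal subgroup of the $2$-group $G$, $\Phi(G)$ meets $Z(G)$ and so contains a central involution. Hence either (i) some central involution $z$ can be chosen with $G/\langle z\rangle$ elementary abelian, or (ii) $G/\langle z\rangle$ is extraspecial for every central involution $z$ (these being the only options permitted by the hypothesis).

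In case (i), such a choice of $z$ forces $\Phi(G)=\langle z\rangle\cong C_2$, hence $G'\leq\langle z\rangle$; since an abelian group whose Frattini subgroup has order $2$ is not filled (Theorem \ref{T1}(iii)), $G$ is nonabelian and $G'=\langle z\rangle$. As $G/\langle z\rangle$ is elementary abelian, every square lies in $\langle z\rangle$, so $Z(G)$ has exponent dividing $4$. If $Z(G)=\langle z\rangle$ then $G$ is extraspecial. If $Z(G)$ contains an involution $z'\neq z$, then $z'\notin\Phi(G)$, so $z'$ lies outside some maximal subgroup $H$; since $z'$ is central this gives $G=H\times\langle z'\rangle$, where $H\cong G/\langle z'\rangle$ is filled of order $2^{n-1}$ — not elementary abelian, else $G$ would be — hence extraspecial, so $G\cong E\times C_2$ with $E$ a filled extraspecial group of order $2^{n-1}$. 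The only remaining possibility, $Z(G)$ cyclic of order $4$, makes $G$ a central product of an extraspecial group with $C_4$, exactly as in the last paragraph of the proof of Theorem \ref{thm1}, contradicting Lemma \ref{lemma8}. This settles case (i). It also yields the second assertion: if filled groups of order $2^{n-1}$ are all elementary abelian, then case (ii) cannot occur, and in case (i) the subgroup $H$ above would have to be extraspecial rather than elementary abelian, so that subcase cannot arise either, leaving $G$ elementary abelian or extraspecial.

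Case (ii) is the heart of the proof and I expect it to be the main obstacle. Here no $G/\langle z\rangle$ is elementary abelian, so $\Phi(G)\not\leq\langle z\rangle$ for every central involution $z$; moreover, if some central involution lay outside $\Phi(G)$, then $\Phi(G)\cong\Phi(G/\langle z\rangle)\cong C_2$, and quotienting by a generator of $\Phi(G)$ would give an elementary abelian quotient, against case (ii). Hence every central involution lies in $\Phi(G)$, and from $\Phi(G)/\langle z\rangle=\Phi(G/\langle z\rangle)\cong C_2$ we get $|\Phi(G)|=4$ and $\Omega_1(Z(G))\leq\Phi(G)$. It then remains to show no filled group has this shape. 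Since $G$ is filled it is not cyclic and, by Lemma \ref{maxclass} (as $|\Phi(G)|=4\neq2$), not of maximal class, so the counting restrictions of Berkovich's theorem stated above are available; I would attempt a contradiction by combining those restrictions on the $c_k(G)$ with $|\Phi(G)|=4$ and the way the central involutions sit inside $\Phi(G)$. If that is not decisive, one argues as in Proposition \ref{P1} and Theorem \ref{C1}: using the local-maximality criterion $G=T(S)\cup\sqrt S$ of Theorem \ref{T1}(i), construct in such a $G$ an explicit product-free set $S$ that is locally maximal but has some element of $G^{\ast}$ outside $S\cup SS$, contradicting that $G$ is filled.

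The one place where genuine work is needed — and where Theorem \ref{thm1} alone is not enough — is ruling out case (ii), i.e.\ the configuration $|\Phi(G)|=4$ with $\Omega_1(Z(G))\leq\Phi(G)$; such groups do exist abstractly, so filledness must be used in an essential way. Once case (ii) is vacuous, the corollary follows by assembling the subcases of case (i).
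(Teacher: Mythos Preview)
Your handling of case (i) is correct and matches the paper's endgame. The genuine gap is case (ii): you set it up correctly (if every $G/\langle z\rangle$ is extraspecial then $|\Phi(G)|=4$ and $\Omega_1(Z(G))\leq\Phi(G)$) but you do not actually eliminate it. Waving at Berkovich's counting theorem, or proposing to build an explicit non-filling locally maximal product-free set ``as in Proposition \ref{P1}'', is not a proof; such groups with $|\Phi(G)|=4$ do exist, so you need a concrete argument that uses filledness.

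The paper supplies exactly the missing idea, and it \emph{is} via Berkovich, but used in a specific way you did not find. Since $G$ is filled of order $\geq 16$, it is neither cyclic nor of maximal class (Lemma \ref{maxclass}), so $c_2(G)$ is even. Conjugacy classes of subgroups of order $4$ have length $1$ or a positive power of $2$; a composition series shows at least one normal subgroup of order $4$ exists; parity then forces at least two distinct normal subgroups $H$, $K$ of order $4$. Each contains a central involution $z$, and $G/H\cong (G/\langle z\rangle)\big/(H/\langle z\rangle)$ is a proper nontrivial quotient of a group that is either elementary abelian or extraspecial, hence elementary abelian (any nontrivial normal subgroup of an extraspecial group contains its order-$2$ Frattini subgroup). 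Thus $\Phi(G)\leq H\cap K$, and since $H\neq K$ have order $4$ this gives $|\Phi(G)|\leq 2$. Your case (ii), where $|\Phi(G)|=4$, is therefore impossible, and the whole argument collapses to your case (i).
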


\begin{proof}
	Note first that if $G$ is a filled 2-group, then $G' = \Phi(G)$. Suppose the only filled groups of order $2^{n-1}$ are elementary abelian or extraspecial. If $n$ is 2 or 3, then the result holds, so we may assume $n \geq 4$. Now $G$ is clearly not cyclic. Moreover, by Lemma \ref{maxclass}, $G$ is not of maximal class.  Therefore $G$ has an even number of subgroups of order 4. The length of any conjugacy class of subgroups of order 4 is either 1 or even. The composition factors of any 2-group are cyclic of order 2, and hence $G$ has at least one normal subgroup of order 4. Therefore $G$ has at least two normal subgroups, $H$ and $K$ say, of order 4. Any nontrivial normal subgroup intersects the centre of $G$ nontrivially, and so $H$ contains a central involution $z$. The quotient $G/\langle z \rangle$ is filled of order $2^{n-1}$ and so, by hypothesis, either elementary abelian or extraspecial. Hence $G/H$, which is isomorphic to $\frac{G/\langle z \rangle}{H/\langle z \rangle}$, is a nontrivial quotient of an extraspecial or elementary abelian 2-group, and is therefore elementary abelian. Similarly $G/K$ is elementary abelian. This implies that $G' = \Phi(G) \leq H \cap K$. If $G$ is abelian, then $G$ is elementary abelian. 
		
		If $G$ is nonabelian, then $G' = \Phi(G) = \langle z \rangle$, where $z$ is a central involution. If $Z(G)$ contains an involution $t$ other than $z$, then since $t$ is not contained in $\Phi(G)$, there is a maximal subgroup $N$ which does not contain  $t$. Thus $G \cong N \times \langle t \rangle$. Now $G/\langle t \rangle \cong N$, which forces $N$ to be filled of order $2^{n-1}$. So $G$ is elementary abelian unless there is a filled extraspecial group $E$ of order $2^{n-1}$, in which case we also have the possibility that $G \cong E \times C_2$. We now deal with the case that $z$ is the only central involution. In that case, since every nontrivial normal subgroup intersects $Z(G)$ nontrivially, every nontrivial normal subgroup contains $z$ and hence every proper quotient is elementary abelian. Therefore, by Theorem \ref{thm1} and Lemma \ref{lemma8}, $G$ is either elementary abelian or extraspecial. 
		
		We have shown that $G$ is either elementary abelian or extraspecial, except in the case where there is a filled extraspecial group $E$ of order $2^{n-1}$, in which case we have the further possibility that $G \cong E \times C_2$.
\end{proof}

\begin{lemma}\label{lem1}
	Let $S$ be a locally maximal product-free set in a group $G$. If $a \in S$ but $a^{-1} \notin S$, then $a^{-1} \in SS \cup \sqrt S$.
\end{lemma}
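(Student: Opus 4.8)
The plan is to invoke Theorem~\ref{T1}(i), which tells us that because $S$ is locally maximal product-free, $G = T(S)\cup\sqrt S = S\cup SS\cup SS^{-1}\cup S^{-1}S\cup\sqrt S$. In particular the element $a^{-1}$ must lie in one of these five sets. Since $a^{-1}\notin S$ by hypothesis, it will suffice to rule out the two ``mixed'' products $SS^{-1}$ and $S^{-1}S$; whatever remains then forces $a^{-1}\in SS\cup\sqrt S$.

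To exclude $a^{-1}\in SS^{-1}$, I would suppose $a^{-1}=st^{-1}$ for some $s,t\in S$. Left-multiplying by $a$ and then right-multiplying by $t$ gives $t=as$; but $a,s\in S$, so $t\in SS$, contradicting $S\cap SS=\varnothing$. The case $a^{-1}\in S^{-1}S$ is handled the same way: if $a^{-1}=s^{-1}t$ with $s,t\in S$, then inverting gives $a=t^{-1}s$, and left-multiplying by $t$ gives $s=ta\in SS$, again contradicting product-freeness. Here the hypothesis $a\in S$ is exactly what is needed to see that $as$ and $ta$ genuinely lie in $SS$.

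With both mixed products eliminated we are left with $a^{-1}\in S\cup SS\cup\sqrt S$, and since $a^{-1}\notin S$ we conclude $a^{-1}\in SS\cup\sqrt S$, as claimed. I do not expect a genuine obstacle in this argument; the only point requiring a little care is the algebraic rearrangement of $a^{-1}=st^{-1}$ and $a^{-1}=s^{-1}t$ into the form (element of $S$) $=$ (product of two elements of $S$), so that product-freeness can be applied.
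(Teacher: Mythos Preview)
Your argument is correct and essentially identical to the paper's own proof: both invoke Theorem~\ref{T1}(i) to decompose $G$, then rule out $a^{-1}\in SS^{-1}$ and $a^{-1}\in S^{-1}S$ by the same algebraic rearrangements (the paper writes $a^{-1}=bc^{-1}\Rightarrow ab=c$ and $a^{-1}=b^{-1}c\Rightarrow b=ca$, which match your $t=as$ and $s=ta$ after relabelling). There is nothing to add.
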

\begin{proof}
	Since $S$ is a locally maximal product-free set, we have $G = S \cup SS \cup SS^{-1} \cup S^{-1}S \cup \sqrt S$. By assumption $a^{-1} \notin S$. If $a^{-1} \in SS^{-1}$, then there are $b, c \in S$ with $a^{-1} = bc^{-1}$. But this implies $ab = c$, contradicting the fact that $S$ is product-free. If $a^{-1} = b^{-1}c \in S^{-1}S$, then $b = ca$, another contradiction. Therefore $a^{-1} \in SS \cup \sqrt S$.
\end{proof}

\begin{lemma}
	\label{involutions} Suppose $G$ is a group of exponent 4 all of whose elements of order 4 square to the same central involution $z$. If $S$ is a locally maximal product-free set that does not fill $G$, then $S$ contains $z$ and every element of $S$ is an involution. 
\end{lemma}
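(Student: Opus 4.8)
The plan is as follows. The two conclusions of the lemma are linked: once we know $z \in S$, then $S$ can contain no element of order $4$, since such an element $a$ would give $a^2 = z \in SS$, contradicting product-freeness; and as $G$ has exponent $4$ and $1 \notin S$, it follows at once that every element of $S$ is an involution. So the whole task reduces to showing $z \in S$, and for this I would argue by contradiction: assuming $z \notin S$, I would show that $S$ must in fact fill $G$, contrary to hypothesis.

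So suppose $z \notin S$. Every element of $G$ squares to $1$ or, if it has order $4$, to $z$; since neither $1$ nor $z$ lies in $S$, we get $\sqrt S = \varnothing$. By Theorem \ref{T1}(i) this means $G = T(S) = S \cup SS \cup SS^{-1} \cup S^{-1}S$, so it would suffice to show that $SS^{-1}$ and $S^{-1}S$ are contained in $S \cup SS$. The efficient way to achieve this is to prove that $S$ is symmetric, that is $S = S^{-1}$; then $SS^{-1} = S^{-1}S = SS$, hence $G = S \cup SS$, so $S$ fills $G$ and we have the desired contradiction.

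Establishing $S = S^{-1}$ is the heart of the matter, and the step I expect to be the main obstacle. Suppose, for contradiction, that $b \in S$ with $b^{-1} \notin S$. Involutions are self-inverse, so $b$ has order $4$; thus $b^2 = z$ and $b^{-1} = bz$. By Lemma \ref{lem1}, together with $\sqrt S = \varnothing$, we may write $b^{-1} = cd$ with $c, d \in S$. The crucial claim is that $c$ and $d$ are both involutions. For if $c$ had order $4$, then $c^{-1} = cz$, and using that $z$ is central and $z^2 = 1$ we compute $d = c^{-1}b^{-1} = (cz)(bz) = cb$, so $d \in SS$, contradicting $d \in S$ and product-freeness; the symmetric computation rules out $d$ having order $4$. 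But then $b = (cd)^{-1} = dc \in SS$, contradicting $b \in S$. Hence no such $b$ exists and $S = S^{-1}$, as required.

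By the second paragraph this forces $S$ to fill $G$, contrary to hypothesis, so in fact $z \in S$; and then, as noted at the start, $S$ has no element of order $4$ and therefore consists entirely of involutions.
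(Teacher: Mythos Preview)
Your proof is correct and follows essentially the same strategy as the paper's: reduce to showing $S=S^{-1}$ via Lemma~\ref{lem1}, using that $z\notin S$ forces $\sqrt S=\varnothing$, and then conclude $G=S\cup SS$. The organization differs slightly---you prove $z\in S$ first and deduce the involution property, whereas the paper first shows that a non-involution in $S$ forces $S=S^{-1}$ and only afterwards extracts $z\in S$---and your case analysis on $c,d$ in the expression $b^{-1}=cd$ is a mild variant of the paper's single case split on the first factor, but the substance is the same.
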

\begin{proof}
	Suppose some element $a$ of $S$ is not an involution, and assume for a contradiction that $a^{-1} \notin S$. Now $a^2 = z$, where $z$ is the central involution of $G$. Hence $z \notin S$. Therefore $a^{-1} \notin \sqrt S$. Hence, by Lemma \ref{lem1}, $a^{-1} \in SS$. This means $a^{-1} = bc$ for some $b, c, \in S$. Now if $b$ is an involution then, taking the inverse of both sides, we have $a = c^{-1}b$, which forces $ca = b$, contradicting the fact that $S$ is product-free. Therefore $b$ has order 4, which implies that $a^2 = b^2 = z$. From $za^{-1} = zbc$ we then get $a = b^{-1}c$, whence $ba = c$, another contradiction. Therefore our assumption was false, and $a^{-1} \in S$. This is true for all non-involutions in $S$, and so $S^{-1} = S$. Because $S$ is locally maximal product-free, we know $S \cup SS \cup SS^{-1} \cup S^{-1}S \cup \sqrt S = G$. But $\sqrt S  = \emptyset$ because $z \notin S$, and $SS^{-1} = SS = S^{-1}S$ because $S = S^{-1}$. Therefore $G = S\cup SS$ and $S$ fills $G$.
	
	 We have shown so far that if $S$ is locally maximal and does not fill $G$, then every element of $S$ must be an involution. If $S$ consists of involutions, then $S = S^{-1}$; so $S \cup SS \cup SS^{-1} \cup S^{-1}S = S \cup SS$. If $\sqrt S = \emptyset$ then we would have $S \cup SS  = G$, meaning $S$ fills $G$. Hence $\sqrt S$ cannot be empty. Thus $S$ contains $z$ and every element of $S$ is an involution.
\end{proof}

\begin{prop}\label{computercheck}
If $G$ is a non-abelian filled 2-group of order up to 128, then $G$ is either $D_8$, $D_8 \times C_2$, $D_8 \ast Q_8$ or $(D_8 \ast Q_8) \times C_2$.
\end{prop}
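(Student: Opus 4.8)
The natural strategy is induction on $n$, where $|G| = 2^n$, over the range $3 \le n \le 7$. For the base cases $n = 3, 4$ we quote \cite{AH2015}: the only non-abelian filled 2-groups of orders $8$ and $16$ are $D_8$ and $D_8 \times C_2$ respectively, both on the required list. So fix $n \in \{5,6,7\}$, assume the result for smaller $n$, and let $G$ be a non-abelian filled 2-group of order $2^n$. As in the opening lines of the proof of Corollary~\ref{cor9}, $G/G'$ is a filled abelian group, hence elementary abelian by Theorem~\ref{T1}(iii), so $\Phi(G) = G'$. By Lemma~\ref{maxclass} the group $G$ is not of maximal class, and it is certainly not cyclic; hence by \cite[Theorem 1.17]{berkovich} it has an even number of subgroups of order $4$, at least one normal, so it has at least two normal subgroups of order $4$, each of which (meeting $Z(G)$ nontrivially) contains a central involution.

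Now split into cases. \emph{Case 1: some central involution $t$ lies outside $\Phi(G)$.} Then $t$ is avoided by a maximal subgroup $N$, so $G = N \times \langle t\rangle$ and $N \cong G/\langle t\rangle$ is filled of order $2^{n-1}$. As $G$ is non-abelian so is $N$, and by the inductive hypothesis $N$ is the unique non-abelian filled 2-group of order $2^{n-1}$. \emph{Case 2: every central involution lies in $\Phi(G) = G'$.} Then $Z(G)$ has a unique involution, so is cyclic. If $\Phi(G) \cong C_2$, then, arguing exactly as in the proof of Theorem~\ref{thm1} (using that $G/\Phi(G)$ is elementary abelian, so squares of elements of $G$ lie in $\Phi(G)$), either $Z(G) = \Phi(G) = G' \cong C_2$ and $G$ is extraspecial, or $Z(G) \cong C_4$ and $G \cong E \ast C_4$ for some extraspecial $E$, which Lemma~\ref{lemma8} forbids. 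If $|\Phi(G)| > 2$, then for a central involution $z$ the quotient $G/\langle z\rangle$ is filled of order $2^{n-1} \ge 16$ with $\Phi(G/\langle z\rangle) = \Phi(G)/\langle z\rangle \ne 1$, hence non-abelian (by Theorem~\ref{T1}(iii)), hence the unique non-abelian filled 2-group of order $2^{n-1}$; since that group has derived subgroup of order $2$ and $z \in G'$, we get $|G'| = 4$, and the finitely many 2-groups of order $2^n$ with $\Phi(G) = G'$ of order $4$ and such a quotient are enumerated and checked directly to be non-filled.

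Assembling the cases, the candidates for a non-abelian filled 2-group of order $2^n$ with $5 \le n \le 7$ are exactly: $D_8 \times C_2 \times C_2$ and the two extraspecial groups of order $32$ (for $n=5$); $(D_8 \ast Q_8) \times C_2$ (for $n=6$, there being no extraspecial group of order $64$); and $(D_8 \ast Q_8) \times C_2 \times C_2$ and the two extraspecial groups of order $128$ (for $n=7$). Each of these finitely many groups must now be examined. All of them have exponent $4$ with every element of order $4$ squaring to the same central involution $z$, so Lemma~\ref{involutions} applies: a locally maximal product-free set that does not fill the group must consist of involutions and contain $z$. Hence to prove such a group is \emph{not} filled it suffices to exhibit one product-free set $S$ of involutions with $z \in S$ for which $G = T(S) \cup \sqrt{S}$ (so $S$ is locally maximal, by Theorem~\ref{T1}(i)) but $G^{*} \not\subseteq S \cup SS$; and to prove it \emph{is} filled one checks that no set of this restricted form is a counterexample. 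Carrying this out (we used GAP) shows that $D_8 \ast Q_8$ and $(D_8 \ast Q_8) \times C_2$ are filled, while $D_8 \times C_2 \times C_2$, $(D_8 \ast Q_8) \times C_2 \times C_2$, the extraspecial group of order $32$ distinct from $D_8 \ast Q_8$, and both extraspecial groups of order $128$ are not. This completes the induction.

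The step I expect to be hardest is the positive direction: showing $D_8 \ast Q_8$ and $(D_8 \ast Q_8) \times C_2$ are genuinely filled, which requires ruling out \emph{every} admissible locally maximal product-free set as a potential counterexample rather than merely producing one bad set, and where the reduction of Lemma~\ref{involutions} to sets of involutions containing $z$ is what makes the check manageable. A lesser obstacle is disposing of the leftover Case 2 configuration with $|\Phi(G)| = 4$ cleanly instead of by brute enumeration.
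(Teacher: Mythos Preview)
Your argument is correct and arrives at the same conclusion, but the route differs from the paper's. The paper's proof is deliberately blunt: it invokes computer search (and \cite{AH2015}) for the full classification up to order~32, applies Corollary~\ref{cor9} once to reduce order~64 to the single non-abelian candidate $(D_8\ast Q_8)\times C_2$, verifies that candidate via Lemma~\ref{involutions}, and for order~128 simply runs a machine search over all groups whose proper quotients are filled, again using Lemma~\ref{involutions} to restrict attention to sets of involutions containing~$z$. You instead re-derive the structural content of Corollary~\ref{cor9} inside an induction and push it through orders~32 and~128 as well, at the price of your Case~2b (where $|\Phi(G)|=4$), which then needs its own separate enumeration. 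What your approach buys is an explicit short list of candidates at each order before any computation; what the paper's approach buys is simplicity---no Case~2b to dispose of, and at order~128 no structural case analysis at all.

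Two minor points of presentation. The sentence ``Then $Z(G)$ has a unique involution, so is cyclic'' is only justified once you are inside the subcase $\Phi(G)\cong C_2$; placed at the head of Case~2 it is unwarranted, since a larger $\Phi(G)$ could contain several central involutions. Your Case~2b never uses cyclicity of $Z(G)$, so the logic survives, but the sentence belongs inside Case~2a. Also, the passage about two normal subgroups of order~4 (from \cite[Theorem~1.17]{berkovich}) is never used: it is the engine of the paper's Corollary~\ref{cor9}, but you have replaced that engine with the quotient $G/\langle z\rangle$, so the paragraph can be dropped.
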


\begin{proof}
Computer search allows us to show that the only nonabelian filled 2-groups of order up to 32 are $D_8$, $D_8 \times C_2$ (fitting in with Corollary \ref{cor9}) and $D_8 \ast Q_8$. Corollary \ref{cor9} tells us that the only candidates for filled groups of order 64 are $C_2^6$ and $(D_8 \ast Q_8) \times C_2$. Lemma \ref{involutions} allows us to reduce the work involved in checking that $(D_8 \ast Q_8) \times C_2$ is filled, by checking only sets of involutions. By this means, it is then possible to check by machine that $(D_8 \ast Q_8) \times C_2$ is indeed the only filled non-abelian group of order 64. By restricting the search to non-abelian groups whose quotients are filled and looking only at product-free sets consisting of involutions, computer search also confirmed that there are no non-abelian filled groups of order 128. See Section \ref{secalg} for more details on the algorithms used.
\end{proof}

\begin{notation} \label{notn} Let $G$ be extraspecial of order greater than $128$, so that $|G| = 2^{2n+5}$ for some $n > 1$. Then $G$ has subgroups $H_1, \ldots, H_n$ and $Q$, all isomorphic to $Q_8$, and a subgroup $K$ isomorphic to either $D_8$ or $Q_8$, such that $$G = KH_1\cdots H_nQ$$ where $[K,H_i] = [H_i,H_j] = [H_i,Q] = [K,Q] = 1$ for all distinct $i$, $j$. Furthermore there is an involution $z$ of $G$ such that for all distinct $i, j$ we have $K \cap H_i = H_i \cap H_j = H_i \cap Q = K \cap Q  = \langle z \rangle = Z(G) \cong C_2$.

Write $E = K H_1  \cdots H_n$, so that $E$ is an extraspecial subgroup of index 4 in $G$. We may write
$H_i = \langle a_i, b_i : a_i^4 = 1, b_i^2 = a_i^2, b_ia_i = a^{-1}_ib_i\rangle$, 
$Q = \langle a, b : a^4 = 1, b^2 = a^2, ba = a^{-1}b\rangle$
and $K = \langle \alpha, \beta\rangle$ where $\alpha^4 = 1$, $\beta\alpha = \alpha^{-1}\beta$. If $K \cong D_8$ then $\beta^2 = 1$; otherwise $\beta^2 = \alpha^2$. Note that $z = \alpha^2 = a_1^2 = \cdots = a_n^2 = a^2$.
Elements $g$ of $G$ can be written canonically as $g = dh_1 \cdots h_nq$ where $h_i \in \{1, a_i, b_i, a_ib_i\}$, $q \in \{1, a, b, ab\}$ and $d \in K$. 
Observe that an element $g$ of $G$ has order 4 if and only if an odd number of $d, h_1, \ldots, h_n$ and $q$ have order 4. 
\end{notation}

The next theorem will show that there are no extraspecial filled groups of order greater than 32. The method, using the notation just described, is to break $G$ into the four cosets $E$, $Ea$, $Eb$ and $Eab$. Then we form a set $S$ by taking the union of a suitable locally maximal product-free set of involutions of $K$, with (a carefully chosen) half of the involutions in $Ea$ and half of the involutions in $Eb$. We show that this set $S$ is product-free and then that $S$ is locally maximal, but does not fill $G$. (In particular, $S \cup SS$ cannot contain either $a$ or $b$.) 

\begin{thm}\label{exsp}
If $G$ is an extraspecial group of order greater than 128, then $G$ is not filled.
\end{thm}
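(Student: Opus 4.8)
The plan is to follow exactly the strategy sketched in the paragraph preceding the theorem statement: explicitly construct a locally maximal product-free set $S$ in $G$ that fails to be complete. Using Notation \ref{notn}, write $G = E \cup Ea \cup Eb \cup Eab$ where $E = KH_1\cdots H_n$ is extraspecial of index $4$. First I would record which elements of $G$ are involutions: since $z = \alpha^2 = a_i^2 = a^2$ is central and every element of order $4$ squares to $z$, an element $g = dh_1\cdots h_n q$ is an involution precisely when an even number of the factors $d, h_1,\dots,h_n,q$ have order $4$; multiplying by $a$ (or $b$) toggles the parity of the $q$-factor, so exactly half the elements of each coset $Ea$, $Eb$, $Eab$ are involutions. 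I would then form $S = S_K \cup P_a \cup P_b$, where $S_K$ is a locally maximal product-free set of involutions of $K$ (for $K \cong D_8$ one takes the three non-central involutions lying in one of the Klein four-subgroups, and for $K \cong Q_8$ a careful choice must be made since $Q_8$ has only one involution — here one likely pushes some involutions from $E$ into play instead), and $P_a \subseteq Ea$, $P_b \subseteq Eb$ are the carefully chosen "halves" of the involutions in those cosets.

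The key technical choice is how to pick $P_a$ and $P_b$. The natural device is a homomorphism (or suitable function) $\phi : E \to C_2$ detecting, say, the parity of the number of order-$4$ factors among $h_1,\dots,h_n$ together with the $K$-component, so that the involutions of $Ea$ are $\{ ea : e \in E,\ \phi\text{-condition}\}$; then set $P_a$ to consist of those involutions $ea$ with $\phi(e)$ in a fixed coset, and similarly for $P_b$ with a complementary or matching condition. The design constraints are: (1) $S$ must be product-free — products within $S_K$, within $P_a$ (which land in $E$), within $P_b$ (also in $E$), the cross terms $P_aP_b, P_bP_a \subseteq Eab$, and $S_KP_a, P_aS_K \subseteq Ea$, etc., must all avoid $S$; (2) $a \notin S \cup SS$ and $b \notin S \cup SS$, which is what prevents completeness — since $a \in Ea$, we need $a \notin P_a \cup S_KP_a \cup P_aS_K \cup (\text{the }Ea\text{-part of }P_{ab}\cdots)$, and this is engineered by the parity condition: $a$ itself corresponds to the "wrong" value of $\phi$; (3) $S$ must be locally maximal, i.e. $T(S) \cup \sqrt S = G$ by Theorem \ref{T1}(i). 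Since $S$ consists of involutions, $S = S^{-1}$, so $T(S) = S \cup SS$ and $\sqrt S$ is large (every element of order $4$ squares to $z$, so if $z \in SS$ then $\sqrt S$ captures all of them); thus local maximality reduces to checking that $S \cup SS \cup \sqrt S$ covers all four cosets, which follows from the halves being genuinely half plus the structure of the products.

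Concretely I would organize the verification coset by coset. The $E$-coset: $S_K \cup (S_K S_K) \cup (P_aP_a) \cup (P_bP_b) \cup \sqrt S$ should be all of $E^{*}$ together with enough to not leave gaps — here $\sqrt S$ (assuming $z \in S$ or $z \in SS$) supplies every order-$4$ element of $E$, and $S_KS_K \ni z$, while $P_aP_a$ and $P_bP_b$ sweep out the involutions of $E$. The $Ea$-coset: covered by $S_K P_a \cup P_a S_K \cup (P_b\cdot P_{ab}\text{-type terms})$, and one checks $a$ is the single omitted element (or one of a set of omitted elements), giving non-completeness. Symmetrically for $Eb$. The $Eab$-coset: covered by $P_aP_b \cup P_bP_a \cup S_K\cdot(\text{nothing in }Eab)$, so one must ensure $P_a P_b \cup P_b P_a$ together with $\sqrt S$ exhausts $Eab$.

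The main obstacle I expect is the bookkeeping around the two cases $K \cong D_8$ and $K \cong Q_8$, and more seriously making the single construction work uniformly: the $Q_8$ case is delicate because $Q_8$ contributes no non-central involutions, so the "locally maximal product-free set of involutions of $K$" is just $\{z\}$ or empty, and one must compensate by choosing $P_a, P_b$ (and possibly also exploiting one of the $H_i$, which is why $n > 1$ is assumed so that $E$ itself has a $D_8$ central factor $K$ or enough room) so that the covering argument for the $E$-coset still goes through. A secondary obstacle is verifying product-freeness of the cross terms $P_a P_b \subseteq Eab$ against $S$: since $S \cap Eab = \emptyset$ this is automatic, but one must equally check $P_a P_a \subseteq E$ avoids $S_K \subseteq E$ and avoids $P_a, P_b$ — this needs the parity/$\phi$ condition to be chosen so that a product of two "$\phi$-value-$\epsilon$" involutions lands in the "$\phi$-value-$0$" part of $E$ disjoint from $S_K$. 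Once the right $\phi$ and the right halves are pinned down, each of these is a routine (if lengthy) verification using the commutator relations $[K,H_i] = [H_i,H_j] = [H_i,Q] = [K,Q] = 1$ and $z = \alpha^2 = a_i^2 = a^2$ from Notation \ref{notn}, together with Theorem \ref{T1}(i) to conclude local maximality.
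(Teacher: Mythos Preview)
Your plan follows the paper's strategy, but it contains a genuine error that would derail the execution. You write that $\sqrt{S}$ captures all order-$4$ elements ``if $z \in SS$'', and later ``assuming $z \in S$ or $z \in SS$''. By definition $\sqrt{S} = \{x : x^2 \in S\}$, so you need $z \in S$; having $z \in SS$ does nothing for $\sqrt{S}$. This matters because your proposed $S_K$ for $K \cong D_8$ --- ``the three non-central involutions lying in one of the Klein four-subgroups'' --- both excludes $z$ and is numerically impossible (each Klein four-subgroup of $D_8$ contains only two non-central involutions, the third non-identity element being $z$ itself). The paper takes $X = \{z, z\beta, z\alpha\beta\}$ when $K \cong D_8$ and $X = \{z\}$ when $K \cong Q_8$; in both cases $z \in S$, so every element of order $4$ lies in $\sqrt{S}$ and local maximality reduces to covering the involutions of $G$ by $S \cup SS$.

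Two further points where your outline is too vague to succeed. First, the ``half'' cannot be selected by a homomorphism $\phi: E \to C_2$: any such $\phi$ factors through $E/\langle z\rangle$ and so cannot separate $g$ from $gz = g^{-1}$, which is exactly the distinction you need. The paper instead fixes canonical coset representatives and defines $U = \{g = dh_1\cdots h_n \in E : o(g) = 4,\ d \in \{1,\alpha,\beta,\alpha\beta\}\}$, then sets $S = X \cup Ua \cup Ub$; the point is that $U \cap U^{-1} = \emptyset$ while $U \cup zU$ is all of the order-$4$ elements of $E$. Second, the step you describe as ``$P_aP_a$ and $P_bP_b$ sweep out the involutions of $E$'' is the real work: the paper establishes the non-obvious inclusion $E \setminus K \subseteq UUz$ by a careful case analysis on the canonical form of $g$ (order $2$ versus order $4$, which $h_i$ are trivial, parity of $n$), and this is where the hypothesis $n > 1$ is actually used. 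Without that lemma the coverage of the $E$- and $Eab$-cosets, and hence local maximality, does not follow.
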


\begin{proof} Let $G$ be extraspecial of order greater than 128. Writing $G$ as described in Notation \ref{notn}, let $$U = \{g \in E: o(g) = 4 {\text{ and }} g   = dh_1\cdots h_{n} {\text{ with }} d \in \{1, \alpha, \beta, \alpha\beta\}\}.$$ Notice that for any $g \in U$ we have $g^{-1} = zg$, and so $U \cap U^{-1} = \emptyset$. Moreover $U \cup U^{-1}$ comprises every element of order 4 in $E$. We next define a certain subset $X$ of $K$ as follows.
	$$X = \left\{\begin{array}{ll}\{z, z\beta, z\alpha\beta\} & {\text{ if $K \cong D_8$;}}\\
\{z\} & {\text{ if $K \cong Q_8$.}}	\end{array}\right.$$  In each case $X$ is a locally maximal product-free set of involutions in $K$. Finally, let $$S = X \cup Ua \cup Ub.$$ We claim that $S$ is a locally maximal product-free set of $G$ that does not fill $G$. We have

$$SS = (XX\cup UUz) \cup (XU \cup UX)a \cup (XU \cup UX)b \cup (UU \cup UUz)ab.$$

Our first goal is to show that $S$ is product-free. We do this by noting that $G = E \cup Ea \cup Eb \cup Eab$ and considering the intersection of $S\cap SS$ with each of these cosets in turn. Consider $(S\cap SS) \cap E$. This is equal to $X \cap (XX \cup UUz)$. Now $X$ is locally maximal product-free in $K$. Thus $X \cap XX = \emptyset$.  Suppose $x \in X \cap UUz$. Then there are elements $g, g'$ of $U$ with $gg'z = x$, or equivalently $gg'zx = 1$. If $x = z$ then $g' = g^{-1}$, but $U \cap U^{-1} = \emptyset$, a contradiction. So $x \neq z$. This implies we are in the case $K \cong D_8$, and furthermore that $x \in \{z\beta, z\alpha\beta\}$.  Writing $g = dh_1 \cdots h_n$ and $g' = d'h_1'\cdots h_n'$ in the canonical way we have that $gg'zx = 1$, and so $(dd'zx)(h_1h_1')\cdots (h_nh_n') = 1$. Thus $hh_i' \in \{1, z\}$ for all $i$. Since $\{h_i, h_i'\} \subseteq \{1, a_i, b_i, a_ib_i\}$, this implies $h_i = h_i'$ for all $i$. Thus $g = dh$ and $g' = d'h$, where $h = h_1\cdots h_n$. As they are elements of $U$, both $g$ and $g'$ must have order 4, which implies that $d$ has order 4 if and only if $d'$ has order 4. Since we are in the case $K \cong D_8$ we note that $\alpha$ has order 4 and $\alpha^j\beta$ has order 2 for all $j$. Now $\{d, d'\} \subseteq \{1, \alpha, \beta, \alpha\beta\}$. So either $d = d' = \alpha$ or $\{d, d'\}\subseteq \{1, \beta, \alpha\beta\} = zX$. In the former case, $g = g'$ so $gg'z = 1 \notin X$. In the latter case, $h$ must have order 4 (otherwise $g$ would be an involution) and so $gg'z = dd'h^2z = dd' \in (zX)(zX) = XX$. But $X$ is product-free, so $gg'z \notin X$. Therefore $(SS \cap S)\cap E = \emptyset$.

Next we look at $(S \cap SS) \cap Ea$. This is equal to $((XU \cup UX) \cap U)a$. Suppose there are $x$ in $X$ and $u$ in $U$ such that $xu \in U$ or $ux \in U$. If $x = z$ then $xu = ux = u^{-1} \notin U$, so again we are reduced to the case $K \cong D_8$ and $x \in \{z\beta, z\alpha\beta\}$. We can write $u = dh_1 \cdots h_n$ where $d \in \{1, \alpha, \beta, \alpha\beta\}$ and $h_i \in \{1, a_i, b_i, a_ib_i\}$. Let $h = h_1\cdots h_n$. If $d = \alpha$ then $u = \alpha h$, $ux = (\alpha x)h$ and $xu = (x\alpha) h$. Since $u$ has order 4, it follows that $h^2 = 1$. But $\alpha x \in \{z\alpha\beta, \beta\}$ and $x \alpha \in \{\alpha\beta, z\beta\}$, whence $ux$ and $xu$ both have order 2 and so are not contained in $U$, a contradiction. Therefore $d$ is contained in $\{1, \beta, \alpha\beta\} = zX$. In particular $d$ is an involution, which implies $h$ has order 4. From the fact that $d \in zX$ we get that $dx \in zXX$, which intersects $zX$ trivially as $X$ is product-free. Thus $dx \in K\setminus zX = \{z, \alpha, z\alpha, z\beta, z\alpha\beta\}$. Thus either $dx \notin \{1, \alpha, \beta, \alpha\beta\}$, meaning $(dx)h$ does not have the right first component to be an element of $U$, or $dx = \alpha$ which means $dxh$ has order 2. The same argument applies to $xd$. Thus, either way, $xu$ and $ux$ are not elements of $U$. Thus $(S\cap SS) \cap Ea = \emptyset$. Since $(S\cap SS) \cap Eb = ((XU \cup UX) \cap U)b$, the same argument shows that $(S\cap SS)\cap Eb = \emptyset$. Finally, $S \cap Eab = \emptyset$, so clearly $(S\cap SS) \cap Eab = \emptyset$. Therefore we have shown that $S \cap SS = \emptyset$. Hence $S$ is product-free.\\

In order to show that $S$ is locally maximal, we need to examine $UUz$ more carefully. We will show that $E\setminus K \subseteq UUz$. Let $g \in E$, and write (canonically) $g = dh_1 \cdots h_n$ where $d \in K$ and $h_i \in \{1, a_i, b_i, a_ib_i\}$. Suppose there is some $i$ for which $h_i \neq 1$.  Without loss of generality we can assume $i=1$. 
Suppose $g$ has order 4. We use the value of $h_1$ to define elements $v_1$ and $w_1$ of $H_1$ as follows.

\begin{center}
	\begin{tabular}{c|cc}
		$h_1$ & $v_1$ & $w_1$ \\
		\hline  $a_1$ & $a_1b_1$& $b_1$\\
		  $b_1$ & $a_1$& $a_1b_1$\\
		  $a_1b_1$ & $b_1$& $a_1$
	\end{tabular}
\end{center}
 In each case $v_1w_1 = h_1z$ and $w_1v_1 = h_1$. Now set $g' = dh_1' h_2\cdots h_n$. Exactly one of $g'$ and $g' z$ is in $U$, and also $w \in U$. Now $g' w_1 z = w_1 (zg')z = g$. Exactly one of these expressions is in $UUz$. Thus $g \in UUz$.  

Now suppose $g$ has order 2, and we are still assuming $g = dh_1\cdots h_n$ where $h_1 \neq 1$ and $d \in K = \{1, \alpha, \beta, \alpha\beta, z, z\alpha, z\beta, z\alpha\beta\}$. If $d \in \{z, z\alpha, z\beta, z\alpha\beta\}$ then let $u_1 = h_1$ and $u_2 = (zd)h_2\cdots h_n$. Then $u_1$, $u_2 \in U$ and $g = u_1u_2z \in UUz$. If $d = 1$ then $\alpha, \alpha g \in U$ and $g = (\alpha)(\alpha g)z \in UUz$. Suppose $d \in \{\alpha, \beta,\alpha\beta\}$ and that $d$ has order 4. If $h_j = 1$ for any $j$, then without loss of generality $h_2 = 1$; set $u_1 = a_2g$ and $u_2 = a_2$. Then $u_1, u_2 \in U$ and $g = u_1u_2z \in UUz$. On the other hand, if for all $j$ we have $h_j \neq 1$, then for each $h_j$ there are $v_j$ and $w_j$ in $H_j$ (as in the table above for $h_1$) satisfying $v_jw_j = h_jz$ and $w_jv_j = h_j$. Moreover, from the assumption that $d$ has order 4 and $g$ has order 2, it follows that $n$ is odd; in particular $n \geq 3$, since we are assuming $n>1$. 
Set $u_1 = h_1w_2\cdots w_{n-1}v_n$ and $u_2 = dv_2\cdots v_{n-1}w_n$. Then since $n$ is odd, $u_1$ and $u_2$ have order 4 and so $u_1, u_2 \in U$. Also $u_1u_2z = d(h_1)(w_2v_2)\cdots (w_{n-1}v_{n-1})\cdots (v_nw_nz) = g$. Thus $g \in UUz$. Finally we must consider the case where $g$ has order 2, $d \in \{\alpha, \beta, \alpha\beta\}$ and $d$ has order 2. This means $K \cong D_8$, $d \in \{\beta, \alpha\beta\}$ and an even number of the $h_i$ are nontrivial (including, by assumption, $h_1$). If $d = \beta$, set $u_1 = \alpha w_1 h_2\cdots h_n$ and $u_2 = \alpha\beta v_1$. If $d = \alpha\beta$, set $u_1 = \alpha v_1h_2\cdots h_n$ and $u_2 = \beta w_1$. Then $u_1, u_2 \in U$ and $g = u_1u_2z \in UUz$.  We have now shown that $E\setminus K \subseteq UUz$. \\

To show that $S$ is locally maximal, we note first that since $S$ contains $z$, all elements of order 4 in $G$ are contained in $\sqrt S$. Therefore $S$ is locally maximal if and only if every involution of $G$ is contained in $S \cup SS$. Involutions of $Ea$ are of the form $ga$ where $g \in E$ and $g$ has order 4. The set of elements of order 4 in $E$ is $U \cup Uz$. Therefore the involutions of $Ea$ are contained in the set $Ua \cup UXa$ which is contained in $S \cup SS$. Similarly every involution in $Eb$ appears in $S \cup SS$. 

Now consider $(S \cup SS) \cap E$, which is given by $X \cup XX \cup UUz$. Every involution of $K$ is contained in $X \cup XX$, because $X$ fills $K$, and every involution of $E \setminus K$ is contained in $UUz$. Thus $S \cup SS$ contains all the involutions of $E$. Finally we look at $(S \cup SS) \cap Eab$, which is the set
$(UU \cup UUz)ab$. The involutions of $Eab$ are elements $gab$ where $g$ in $E$ has order 4. Since $E \setminus K \subseteq UUz$, all that remains is to express every element of $K$ that has order 4 as an element of $UU$ or $UUz$. If $K \cong D_8$, then since $n > 1$ we can write $\alpha = (\beta a_1)(\alpha\beta a_1) \in UU$, and so $\alpha z \in UUz$, and we are done. If $K \cong Q_8$ then we have $\alpha = \beta(\alpha\beta)$, $\beta = (\alpha\beta)\alpha$ and $\alpha\beta$ as elements of $UU$, and their inverses as elements of $UUz$. Therefore every involution of $Eab$ is indeed contained in $S \cup SS$. We have now shown that $S\cup SS$ contains all the involutions of $G$, and hence $S$ is locally maximal product-free. However, $a \notin S \cup SS$. Therefore $S$ does not fill $G$. Thus $G$ is not a filled group. 
\end{proof}

\begin{cor}\label{2group}
Let $G$ be a 2-group. Then $G$ is filled if and only if $G$ is either elementary abelian, or one of $D_8$, $D_8 \times C_2$, $D_8 \ast Q_8$ or $(D_8 \ast Q_8) \times C_2$.
\end{cor}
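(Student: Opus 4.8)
The plan is to prove the two directions separately, with the forward (``only if'') direction handled by strong induction on $|G|$, fed by Proposition~\ref{computercheck}, Corollary~\ref{cor9} and Theorem~\ref{exsp}.

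For the ``if'' direction there is essentially nothing to do: elementary abelian $2$-groups are filled by Theorem~\ref{T1}(iii), and $D_8$, $D_8 \times C_2$, $D_8 \ast Q_8$ and $(D_8 \ast Q_8)\times C_2$ are filled by Proposition~\ref{computercheck} (and the machine verification referred to there).

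For the ``only if'' direction, suppose $G$ is a filled $2$-group with $|G| = 2^n$. If $n \le 7$, then either $G$ is abelian, hence elementary abelian by Theorem~\ref{T1}(iii), or $G$ is nonabelian, hence one of $D_8$, $D_8\times C_2$, $D_8\ast Q_8$, $(D_8\ast Q_8)\times C_2$ by Proposition~\ref{computercheck}. So assume $n \ge 8$ and, inductively, that every filled $2$-group of order less than $2^n$ lies on our list. I claim every filled group of order $2^{n-1}$ is elementary abelian. If $n = 8$ this is immediate from Proposition~\ref{computercheck} (there are no nonabelian filled $2$-groups of order $128$) together with Theorem~\ref{T1}(iii). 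If $n \ge 9$, then by the inductive hypothesis every filled group of order $2^{n-1} \ge 256$ lies on the list, but the four nonabelian groups on the list have orders $8$, $16$, $32$ and $64$, all strictly smaller than $2^{n-1}$; so again every filled group of order $2^{n-1}$ is elementary abelian. Hence the hypothesis of the second part of Corollary~\ref{cor9} is met, and $G$ is either elementary abelian or extraspecial. An extraspecial $2$-group has order an odd power of $2$, so if $G$ were extraspecial then $|G| = 2^n$ with $n \ge 8$ would force $n \ge 9$ and $|G| \ge 512 > 128$, contradicting Theorem~\ref{exsp}. Therefore $G$ is elementary abelian, completing the induction.

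The argument is really just bookkeeping; all the substance is carried by the cited results. The only steps needing any care are the interplay of orders in the inductive step — in particular noting that for $n \ge 9$ none of the sporadic groups on the list can have order $2^{n-1}$, so that Corollary~\ref{cor9} applies in its ``elementary abelian only'' form — and the observation that extraspecial $2$-groups occur only in odd powers of $2$, which is exactly what rules out the extraspecial case once $n \ge 8$.
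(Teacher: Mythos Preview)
Your proof is correct and follows essentially the same approach as the paper, which simply states that the result is immediate from Corollary~\ref{cor9}, Proposition~\ref{computercheck} and Theorem~\ref{exsp}; you have just spelled out the induction and the order-counting details that make this immediacy explicit.
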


\begin{proof}
The proof is immediate from Corollary \ref{cor9}, Proposition \ref{computercheck} and Theorem \ref{exsp}.
\end{proof}

\begin{thm}\label{nilpotent} Let $G$ be a finite nilpotent group. Then $G$ is filled if and only if $G$ is either an elementary abelian 2-group or one of $C_3, C_5$, $D_8$, $D_8 \times C_2$, $D_8 \ast Q_8$ or $(D_8 \ast Q_8) \times C_2$.\end{thm}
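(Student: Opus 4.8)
The plan is to reduce everything to classifications already in hand: Corollary~\ref{2group} for filled $2$-groups, Theorem~\ref{T1}(vi) for filled groups of odd order, and the quotient facts Theorem~\ref{T1}(ii) and~(iii). For the ``if'' direction I would simply check that each group on the list is nilpotent and filled. The groups $D_8$, $D_8\times C_2$, $D_8\ast Q_8$ and $(D_8\ast Q_8)\times C_2$ are $2$-groups, hence nilpotent, and are filled by Corollary~\ref{2group}; elementary abelian $2$-groups are nilpotent and filled by Theorem~\ref{T1}(iii); and $C_3$ and $C_5$ are abelian, hence nilpotent, and filled again by Theorem~\ref{T1}(iii).

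For the converse, let $G$ be a finite nilpotent filled group. Since a finite nilpotent group is the direct product of its Sylow subgroups, write $G = P\times H$, where $P$ is the Sylow $2$-subgroup of $G$ and $H$ is the product of the Sylow $p$-subgroups over the odd primes $p$ (either factor possibly trivial). If $H$ is trivial then $G = P$ is a $2$-group and Corollary~\ref{2group} finishes the job. If $P$ is trivial then $G = H$ has odd order, so Theorem~\ref{T1}(vi) gives $G\cong C_3$ or $C_5$.

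The only case requiring any real thought is when $P$ and $H$ are both nontrivial, and here I would show that $G$ cannot be filled. By Theorem~\ref{T1}(ii) the quotient $G/P\cong H$ is filled; it has odd order, so by Theorem~\ref{T1}(vi) it is $C_3$ or $C_5$. On the other hand, $P$ is a nontrivial $2$-group, so $P/\Phi(P)$ is a nontrivial elementary abelian $2$-group, and hence $P$, and therefore $G$, has $C_2$ as a quotient. Combining the two observations, $G$ has $C_2\times H$ as a quotient; that is, $C_6$ or $C_{10}$ is a quotient of $G$. But by Theorem~\ref{T1}(iii) neither $C_6$ nor $C_{10}$ is filled, contradicting Theorem~\ref{T1}(ii). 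So the mixed case never occurs, and $G$ must be one of the groups listed.

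The argument is really just bookkeeping on top of the earlier results, so I do not expect a genuine obstacle; the single delicate point is the mixed case, where the cleanest route is to descend all the way to the abelian quotient $C_2\times H$ rather than to analyse $G$ directly. (Alternatively, in that case $P$ is a normal subgroup of $G$ of index $|H|\in\{3,5\}$, with every element of prime order in the $H$-factor lying outside $P$, so Theorem~\ref{T1}(iv) or~(v) forces $G\cong C_3$ or $C_5$, again contradicting $|G|=|P|\,|H|>|H|$.)
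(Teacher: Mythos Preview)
Your argument is correct. Both you and the paper reduce immediately to the $2$-group classification (Corollary~\ref{2group}) once the odd part is handled, but the handling of the mixed case differs slightly. The paper argues prime by prime: for each odd $p\mid |G|$ there is a normal subgroup of index $p$, so $C_p$ must be filled, forcing $p\in\{3,5\}$; then Theorem~\ref{T1}(iv) disposes of $p=3$ directly, and after ruling out $25\mid |G|$ (via the non-filled $C_{25}$ quotient), Theorem~\ref{T1}(v) disposes of $p=5$. Your main route instead invokes the odd-order classification Theorem~\ref{T1}(vi) to pin down $H\in\{C_3,C_5\}$ in one stroke, and then kills the mixed case by exhibiting the non-filled abelian quotient $C_2\times H\in\{C_6,C_{10}\}$. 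This is a clean shortcut: it avoids the separate appeal to Theorem~\ref{T1}(iv),(v) and the auxiliary step about $25$, at the cost of using the slightly heavier input Theorem~\ref{T1}(vi). Your parenthetical alternative is essentially the paper's argument.
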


\begin{proof} 
Suppose $G$ is filled and nilpotent. Then $G$ is the direct product of its Sylow subgroups. Therefore for any prime $p$ dividing $|G|$, $G$ has a normal subgroup $N$ of index $p$. Hence, by Theorem \ref{T1}(ii) and (iii), $p$ is one of $2, 3$ or $5$. If $p = 3$, then by Theorem \ref{T1}(iv), $G$ must be cyclic of order 3. So we can assume the only primes dividing $|G|$ are 2 and 5. If $p = 5$ and 25 divides $|G|$ then $G$ has a normal subgroup of index 25, but by Theorem \ref{T1}(iii) there are no filled groups of order 25, a contradiction. Therefore the normal subgroup $N$ of index 5 in $G$ is either trivial or a 2-group. Either way, $N$ contains no elements of order 5. Hence, by Theorem \ref{T1}(v), $G$ must be cyclic of order 5. The only remaining possibility is that $G$ is a 2-group. Theorem \ref{nilpotent} now follows from Corollary \ref{2group}.\end{proof}

\section{Groups of order $2^np$}\label{sec2np}

In this section we show that if $G$ is a group of order $2^np$, where $n$ is a positive integer and $p$ is an odd prime, 
then $G$ is filled if and only if $G$ is $D_6$, $D_{10}$, $D_{12}$, $D_{14}$ or $D_{22}$. 

\begin{lemma}\label{lem:2kp}
Let $p$ be an odd prime and let $k$ be an integer satisfying $\displaystyle k>\sum_{r=1}^{\infty}\left\lfloor\frac{p}{2^r}\right\rfloor$. 
Let $G$ be a group of order $2^k p$.
Then $G$ contains a non-trivial normal elementary abelian 2-subgroup of order no greater than $2^p$.
\end{lemma}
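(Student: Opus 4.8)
The plan is to find the subgroup in two stages: first exhibit \emph{some} nontrivial normal $2$-subgroup $K$ of $G$, and then extract from $K$ a small normal elementary abelian subgroup by a representation-theoretic argument. For the first stage, let $n_2$ be the number of Sylow $2$-subgroups of $G$; by Sylow's theorem $n_2 \in \{1,p\}$. If $n_2 = 1$, a Sylow $2$-subgroup is normal and may serve as $K$. If $n_2 = p$, consider the conjugation action of $G$ on the set of its $p$ Sylow $2$-subgroups; this gives a homomorphism $G \to S_p$ with transitive image, so the image has order $2^a p$ with $2^a p \mid p!$, whence $2^a \mid (p-1)!$. By Legendre's formula and the oddness of $p$ we have $v_2((p-1)!) = v_2(p!) = \sum_{r\ge 1}\lfloor p/2^r\rfloor < k$, so $a < k$ and the kernel $K$ of the action has order $|G|/(2^a p) = 2^{k-a} > 1$; thus $K$ is a nontrivial normal $2$-subgroup of $G$. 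This is the only point at which the hypothesis on $k$ enters.

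For the second stage, set $V = \Omega_1(Z(K))$, the subgroup of elements of order dividing $2$ in $Z(K)$. It is characteristic in $K$, hence normal in $G$; it is nontrivial because $K$ is a nontrivial $2$-group; and it is elementary abelian, so under the conjugation action it becomes a nonzero $\mathbb{F}_2[G]$-module in which every $\mathbb{F}_2[G]$-submodule is automatically a normal elementary abelian $2$-subgroup of $G$. Let $W \le V$ be a minimal nonzero $\mathbb{F}_2[G]$-submodule; it remains to bound $\dim_{\mathbb{F}_2}W$ by $p$. Write $\overline G = G/C_G(W)$, so that $\overline G$ acts faithfully and irreducibly on $W$. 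Then $O_2(\overline G) = 1$: a nontrivial normal $2$-subgroup of $\overline G$ would, by the fixed-point theorem for $2$-groups acting on nonzero $\mathbb{F}_2$-spaces, have a nonzero fixed subspace, and that subspace would be a proper nonzero $\mathbb{F}_2[G]$-submodule of $W$. If $p \nmid |\overline G|$ then $\overline G$ is a $2$-group and the same theorem forces $\dim_{\mathbb{F}_2}W = 1 \le p$. Otherwise $|\overline G| = 2^b p$; since $\overline G$ is solvable (Burnside) and $O_2(\overline G) = 1$, its Fitting subgroup is $O_p(\overline G) \cong C_p$, which is self-centralizing, so by Schur--Zassenhaus $\overline G \cong C_p \rtimes C_{2^b}$ with $C_{2^b}$ acting faithfully on $C_p$; in particular $2^b \mid p-1$.

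It remains to bound the $\mathbb{F}_2$-dimension of a faithful irreducible module $W$ for $\overline G = C_p \rtimes C_{2^b}$ with faithful action. I would use the standard description of such modules: over the algebraic closure $\overline{\mathbb{F}_2}$, every faithful irreducible $\overline G$-module is induced from a nontrivial linear character of $C_p$ and hence has dimension $2^b$, the inertia group of such a character being exactly $C_p$ because $C_{2^b}$ acts freely on the nontrivial characters of $C_p$. Descending from $\overline{\mathbb{F}_2}$ to $\mathbb{F}_2$ multiplies the dimension only by the size of a Frobenius (Galois) orbit, and that size times $2^b$ equals the order of the subgroup of $(\mathbb{Z}/p\mathbb{Z})^\times \cong \mathrm{Aut}(C_p)$ generated by $2$ together with $C_{2^b}$, which is at most $p-1$. (Alternatively, restrict $W$ to $C_p$ and apply Clifford's theorem, the multiplicity in the homogeneous component being forced to $1$ because the relevant inertia quotient is a $2$-group and $\mathbb{F}_2$ has characteristic $2$.) Hence $\dim_{\mathbb{F}_2}W \le p-1$, and $W$ is a nontrivial normal elementary abelian $2$-subgroup of $G$ of order at most $2^{p-1} \le 2^p$, as required.

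The first stage is short; its only real content is that the action on Sylow subgroups brings in $v_2(p!)$, matching the hypothesis on $k$. The main obstacle is the second stage, and within it the delicate point is the upper bound on $\dim_{\mathbb{F}_2}W$: one must rule out large multiplicities in the Clifford decomposition (equivalently, control the Galois orbit), which is exactly where the structural facts $O_2(\overline G) = 1$ and $2^b \mid p-1$ are used. Working throughout inside the fixed $\mathbb{F}_2[G]$-module $\Omega_1(Z(K))$ keeps normality in $G$ automatic and sidesteps any bookkeeping about transporting submodules across quotients.
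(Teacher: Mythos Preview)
Your proof is correct, and the first stage (obtaining a nontrivial normal $2$-subgroup via the conjugation action on Sylow $2$-subgroups, with the hypothesis on $k$ entering through $v_2(p!)$) is exactly the paper's argument, spelled out in slightly more detail.

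Where you diverge is in the second stage. You pass to $V=\Omega_1(Z(K))$, take a minimal $\mathbb{F}_2[G]$-submodule $W$, and then mount a full representation-theoretic analysis of $\overline G=G/C_G(W)$: Fitting subgroup, Schur--Zassenhaus to get $\overline G\cong C_p\rtimes C_{2^b}$ with $2^b\mid p-1$, and finally Clifford theory plus Galois descent to conclude $\dim_{\mathbb{F}_2}W\le p-1$. This is valid (the key point, that the combined Frobenius/$C_{2^b}$ orbit on nontrivial characters of $C_p$ has size at most $p-1$, is correct because $(\mathbb{Z}/p\mathbb{Z})^\times$ acts simply transitively on those characters), and it even yields the slightly sharper bound $|W|\le 2^{p-1}$ together with the extra structural information that $W$ is a minimal normal subgroup.

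The paper, however, replaces all of this machinery by a three-line counting argument. Having produced a nontrivial normal elementary abelian $2$-subgroup $K\trianglelefteq G$ (a minimal normal subgroup contained in the normal $2$-subgroup from stage one), it observes that $K$ is a union of $G$-conjugacy classes; since $|K|$ is even and the class $\{1\}$ has odd size, some other class $T\subseteq K$ has odd size, and an odd divisor of $|G|=2^kp$ is $1$ or $p$. Then $\langle T\rangle$ is normal in $G$ (being generated by a conjugacy class), is elementary abelian (being contained in $K$), and has order at most $2^{|T|}\le 2^p$. So the paper trades your sharper conclusion for a dramatically lighter toolkit: no Clifford theory, no Fitting subgroup, no Schur--Zassenhaus, just parity and Lagrange.
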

\begin{proof}
We show first that $G$ contains some non-trivial normal 2-subgroup $N$.
Consider the set $S_2$ of Sylow 2-subgroups of $G$. By the Sylow theorems, either $|S_2|=1$ or $|S_2|=p$.
If $|S_2|=1$ we take $N$ to be the unique Sylow 2-subgroup.
If $|S_2|=p$ then $G$ acts transitively by conjugation on the set $S_2$, and so the kernel $N$ of this action is a normal subgroup of $G$ which is a 2-group.
The condition on $k$ ensures that $G$ is sufficiently large that $N$ must be non-trivial.

It is a fundamental result that a minimal normal subgroup of a solvable group is elementary abelian. 
Thus $N$ contains some non-trivial elementary abelian 2-subgroup $K$ which is normal in $G$. Now $K$ is a union of conjugacy classes of $G$. 
Since $|K|$ is even and contains the conjugacy class $\{1\}$, it must contain some other conjugacy class $T$ of odd length.
Since $|T|$ must divide $|G|$, we conclude that either $|T|=1$ or $|T|=p$. 
In either case, $\langle T\rangle$ is a normal 2-subgroup of $G$ of order at most $2^p$, as required.
\end{proof}

\begin{cor}\label{n=3}
For any $k\geq 3$, there is no filled group of order $3\times 2^k$.
\end{cor}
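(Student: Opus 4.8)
The plan is to argue by strong induction on $k$, using Lemma \ref{lem:2kp} to pass to a quotient of smaller order. For $p=3$ the sum appearing in that lemma is $\sum_{r\ge 1}\lfloor 3/2^r\rfloor = 1$, so the lemma applies to every group of order $3\cdot 2^k$ with $k\ge 2$, and the normal elementary abelian $2$-subgroup it produces has order at most $2^3=8$. Thus, given a filled group $G$ of order $3\cdot 2^k$ with $k\ge 3$, I would fix a nontrivial elementary abelian normal subgroup $E\le G$ with $|E|=2^a$ and $1\le a\le 3$; by Theorem \ref{T1}(ii) the quotient $G/E$ is again filled, of order $3\cdot 2^{k-a}$.

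When $k\ge 6$ this closes the induction at once: $3\le k-a\le k-1<k$, so by the inductive hypothesis no filled group of order $3\cdot 2^{k-a}$ exists, a contradiction. It then remains to dispose of the base cases $k\in\{3,4,5\}$, i.e.\ the orders $24$, $48$ and $96$. By Theorem \ref{nilpotent} a filled group of any of these orders would be non-nilpotent. For order $24$, no such group exists by the classification of filled groups of order at most $32$ in \cite{AH2015}; for orders $48$ and $96$ one invokes the GAP computation of Section \ref{secalg}, which enumerates all non-nilpotent groups of order at most $2000$. This finishes the proof.

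I expect the base cases to be the only real obstacle, so it is worth recording how one could meet it — and the whole statement — without a machine. By Theorem \ref{T1}(iv) a filled group $G$ of order $3\cdot 2^k$ has no normal subgroup of index $3$, so its Sylow $2$-subgroup is not normal; hence $G$ has exactly three Sylow $2$-subgroups, and conjugation on them maps $G$ onto $S_3$ (the image is transitive, hence $S_3$ rather than $C_3$, since a $C_3$ image would give a normal subgroup of index $3$). Either $G$ has a minimal normal $2$-subgroup of order at most $8$ (which Lemma \ref{lem:2kp} guarantees unless every minimal normal subgroup is a copy of $C_3$), in which case the induction above applies once $k$ is large enough; or $G$ has a normal subgroup $N\cong C_3$, the uniqueness of such an $N$ being forced by $9\nmid|G|$. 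In the latter case, if $N\le Z(G)$ then $G$ is nilpotent, contradicting Theorem \ref{nilpotent}; otherwise Schur--Zassenhaus gives $G=N\rtimes P$ for a Sylow $2$-subgroup $P$, with $P\cong G/N$ a filled $2$-group acting on $N$ by inversion through a subgroup $Q$ of index $2$. By Corollary \ref{2group}, $P$ is one of $C_2^m$, $D_8$, $D_8\times C_2$, $D_8\ast Q_8$, $(D_8\ast Q_8)\times C_2$; factoring $P$ by a suitable normal subgroup contained in $Q$ — the trivial subgroup when $P$ is elementary abelian, the Frattini subgroup $\Phi(P)$ otherwise — then exhibits $S_3\times C_2^2$ as a quotient of $G$ whenever $|G|>24$, and $S_3\times C_2^2$ is not filled since it has order $24$. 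The residual case $|G|=24$, together with the direct verification that no group of order $48$ is filled, is the fiddly core of this alternative route; it is precisely what the computer search is there to avoid.
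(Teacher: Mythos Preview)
Your main argument (the first two paragraphs) is correct and essentially identical to the paper's: both proceed by strong induction, handle the base cases $k\in\{3,4,5\}$ by computer search (you additionally cite \cite{AH2015} for $k=3$), and for $k\ge 6$ invoke Lemma~\ref{lem:2kp} to obtain a normal $2$-subgroup of order at most $8$, whose quotient falls under the inductive hypothesis. The third paragraph is supplementary exploration rather than part of the proof, and you rightly flag that it still bottoms out in the same small-order verifications.
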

\begin{proof}
We proceed by induction. By computer search (see Section \ref{secalg} for details) we know there are no filled groups of order 24, 48 or 96. So the statement is true for $k=3,4,5$.
Suppose the statement is true up to $k\geq 5$ and consider the case $k+1$.
If $G$ is a group of order $3\times 2^{k+1}$, then by Lemma~\ref{lem:2kp} it contains a normal subgroup $H$ of order 2, 4 or 8.
Then $G/H$ has order $3\times 2^{k-2}$, $3\times 2^{k-1}$ or $3\times 2^k$ and so is not filled by the induction hypothesis. 
Thus $G$ is not filled.
\end{proof}

\begin{cor}\label{n=5}
For any $k\geq 2$, there is no filled group of order $5\times 2^k$.
\end{cor}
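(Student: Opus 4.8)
The plan is to follow the proof of Corollary~\ref{n=3} essentially verbatim, running an induction on $k$ and combining Lemma~\ref{lem:2kp} with the quotient criterion Theorem~\ref{T1}(ii). For $p=5$ one computes $\sum_{r=1}^{\infty}\lfloor 5/2^r\rfloor = 2+1 = 3$, so Lemma~\ref{lem:2kp} applies to every group of order $5\cdot 2^k$ with $k\geq 4$, producing a nontrivial normal elementary abelian $2$-subgroup of order at most $2^5=32$.

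For the base of the induction I would invoke the computer search of Section~\ref{secalg}: restricting attention to groups all of whose proper quotients are filled (so that the candidate list is short, using Corollary~\ref{2group} and the fact that $D_{10}$ is the only filled group of order $10$), one checks that there is no filled group of order $20$, $40$, $80$, $160$ or $320$. This establishes the statement for $k=2,3,4,5,6$. For the inductive step, suppose $k\geq 7$ and that the claim holds for all smaller values down to $2$. Let $G$ have order $5\cdot 2^k$. By Lemma~\ref{lem:2kp}, $G$ has a nontrivial normal subgroup $H$ with $|H|=2^j$ for some $j$ with $1\leq j\leq 5$, so $G/H$ has order $5\cdot 2^{k-j}$. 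Since $1\leq j\leq 5$ and $k\geq 7$ we have $2\leq k-j\leq k-1$, so $G/H$ is not filled by the induction hypothesis, and hence $G$ is not filled by Theorem~\ref{T1}(ii).

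The one point needing care --- and the reason the argument is slightly longer than for Corollary~\ref{n=3} --- is that the base of the induction has to reach as far as $k=6$ (order $320$), rather than stopping at $k=3$. This is forced by two features special to $p=5$: the Lemma~\ref{lem:2kp} bound on $|H|$ is now $2^5$ instead of $2^3$, and, more significantly, $C_5$ and $D_{10}$ are themselves filled, so the quotient argument gives nothing whenever $G/H$ could have order $5$ or $10$, which remains possible exactly when $k\leq 6$. Thus the only real obstacle is pushing the GAP verification up to order $320$; this is the same kind of computation as in the $n=3$ case and is made tractable by the fact that one need only consider groups whose proper quotients are all filled.
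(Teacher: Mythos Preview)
Your proof is correct and matches the paper's argument essentially line for line: the same computer-verified base cases $k=2,\dots,6$, and the same inductive step via Lemma~\ref{lem:2kp} (with $|H|\le 2^5$) together with Theorem~\ref{T1}(ii). Your additional paragraph explaining why the base must reach $k=6$ is accurate commentary but not part of the paper's proof.
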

\begin{proof}
By computer search  we know there are no filled groups of order 20, 40, 80, 160 or 320. So the statement is true for $k=2,3,4,5,6$.
Suppose the statement is true up to $k\geq 6$ and consider the case $k+1$.
If $G$ is a group of order $5\times 2^{k+1}$, then by Lemma~\ref{lem:2kp} it contains a normal subgroup $H$ of order 2, 4, 8, 16 or 32.
Then $G/H$ has order $5\times 2^{k-4}$, $5\times 2^{k-3}$, $5\times 2^{k-2}$, 
$5\times 2^{k-1}$ or $5\times 2^k$ and so is not filled by the induction hypothesis. Thus $G$ is not filled.
\end{proof}

\begin{cor}\label{n=7}
For any $k\geq 2$, there is no filled group of order $7\times 2^k$.
\end{cor}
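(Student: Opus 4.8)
The plan is to reproduce, for $p=7$, the induction-on-$k$ argument already used for Corollaries \ref{n=3} and \ref{n=5}: the base cases come from the computer search of Section \ref{secalg}, and the inductive step combines Lemma \ref{lem:2kp} with Theorem \ref{T1}(ii). First I would record that $\sum_{r=1}^{\infty}\lfloor 7/2^r\rfloor = 3+1 = 4$, so Lemma \ref{lem:2kp} applies to any group of order $7\times 2^m$ with $m\geq 5$ and produces a non-trivial normal elementary abelian $2$-subgroup of order at most $2^7=128$; in other words a normal subgroup of order $2$, $4$, $8$, $16$, $32$, $64$ or $128$.

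For the base of the induction I would appeal to the computer search to verify that there is no filled group of order $28$, $56$, $112$, $224$, $448$, $896$ or $1792$. These are the groups of order $7\times 2^k$ for $k=2,3,\dots,8$, and all lie below $2000$, so they fall within the range handled in Section \ref{secalg}. This establishes the statement for $2\leq k\leq 8$.

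For the inductive step, assume the statement holds for all values up to some $k\geq 8$ and let $G$ be a group of order $7\times 2^{k+1}$. Since $k+1\geq 9>4$, Lemma \ref{lem:2kp} gives a normal subgroup $H$ of $G$ with $|H|=2^j$ for some $j$ with $1\leq j\leq 7$. Then $G/H$ has order $7\times 2^{k+1-j}$, and because $1\leq j\leq 7$ and $k\geq 8$ we have $2\leq k-6\leq k+1-j\leq k$; so by the induction hypothesis $G/H$ is not filled. By Theorem \ref{T1}(ii), $G$ is not filled, completing the induction.

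The one place where care is needed — and the reason the base cases must be pushed as far as $k=8$ rather than stopping near $k=5$, where the lemma first applies — is that $D_{14}$ is itself a filled group, so a quotient of order $14$ would not produce a contradiction. Since the normal subgroup extracted from Lemma \ref{lem:2kp} can be as large as $2^7$, passing to $G/H$ can reduce the order by a factor of up to $128$; insisting that $k\geq 8$ in the inductive step guarantees $|G/H|\geq 28$, keeping the quotient inside the range covered by the induction hypothesis and clear of the exceptional small dihedral groups. I do not expect any genuine obstacle beyond this bookkeeping of ranges and the correctness of the finite computer verification.
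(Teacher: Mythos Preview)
Your proposal is correct and follows essentially the same approach as the paper: computer-verified base cases for $k=2,\dots,8$, then induction via Lemma~\ref{lem:2kp} and Theorem~\ref{T1}(ii), with the normal $2$-subgroup of order at most $2^7$ ensuring the quotient lands in the inductive range. Your final paragraph explaining why the base cases must extend to $k=8$ (to avoid the filled group $D_{14}$) is a helpful clarification the paper leaves implicit.
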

\begin{proof}
By computer search, we know there are no filled groups of order 28, 56, 112, 224, 448, 896 or 1792. So the statement is true for $k=2,3,4,5,6,7,8$.
Suppose the statement is true up to $k\geq 8$ and consider the case $k+1$.
If $G$ is a group of order $7\times 2^{k+1}$, then by Lemma~\ref{lem:2kp} it contains a normal subgroup $H$ of order 2, 4, 8, 16, 32, 64 or 128.
Then $G/H$ has order $7\times 2^{k-6}$, $7\times 2^{k-5}$, $7\times 2^{k-4}$, $7\times 2^{k-3}$, 
$7\times 2^{k-2}$, $7\times 2^{k-1}$ or $7\times 2^k$ and so is not filled by the induction hypothesis. 
Thus $G$ is not filled.
\end{proof}

\begin{lemma} \label{ncyclic} Suppose $G$ is a filled group of order $2^np$, where $n \geq 2$ and $p$ is an odd prime. If $G$ has a normal subgroup of order $p$, then $G$ contains a central involution.  
\end{lemma}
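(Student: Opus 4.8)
The plan is to examine the centralizer $C=C_G(N)$ of the given normal subgroup $N$ of order $p$. Since $N$ is abelian, $N\le C$, and $C$ is normal in $G$ with $G/C$ embedding into $\mathrm{Aut}(N)\cong C_{p-1}$; as $p\nmid p-1$, this forces $|G/C|$ to be a power of $2$, so $|C|=2^mp$ for some $m\le n$. The key preliminary step is to rule out $C=N$. If $C=N$, then $G/N=G/C_G(N)$ embeds into $\mathrm{Aut}(N)$ and is therefore cyclic; but $|G/N|=2^n$ with $n\ge 2$, so $G/N$ is a cyclic $2$-group of exponent at least $4$, hence neither elementary abelian nor $C_3$ nor $C_5$, and so not filled by Theorem~\ref{T1}(iii) --- contradicting Theorem~\ref{T1}(ii). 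This is the only point where both the filled hypothesis and the assumption $n\ge 2$ are needed.

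With $C\neq N$ secured, I would next produce a nontrivial normal elementary abelian $2$-subgroup of $G$ centralized by $N$. Every element of $C$ centralizes $N$, so $N\le Z(C)$; since $N$ is a normal Hall subgroup of $C$, Schur--Zassenhaus gives a complement $P_C$ (a Sylow $2$-subgroup of $C$), and centrality of $N$ upgrades this to $C=N\times P_C$ with $|P_C|=2^m\ge 2$. Then $P_C$ is the unique Sylow $2$-subgroup of $C$, hence characteristic in $C$ and so normal in $G$; consequently its characteristic subgroup $V=\Omega_1(Z(P_C))$ is a nontrivial elementary abelian $2$-group, normal in $G$, and $N$ centralizes $V$ because $V\le P_C\le C_G(N)$.

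To conclude, I would apply the standard fixed-point lemma for $p$-groups. View $V$ as a nonzero $\mathbb{F}_2$-vector space on which $G$ acts by conjugation; since $C_G(V)\supseteq N$, this action factors through the $2$-group $Q=G/C_G(V)$ (a quotient of the $2$-group $G/N$). The $Q$-orbits on $V\setminus\{1\}$ have sizes that are powers of $2$ and sum to the odd number $|V|-1$, so at least one of them is a singleton; the corresponding nonidentity element of $V$ is fixed by $Q$, hence by $G$, so it lies in $Z(G)$, and being an element of $V$ it is an involution. Thus $G$ has a central involution.

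I expect the one genuinely delicate point to be the exclusion of $C=N$: it is brief, but it is exactly where Theorems~\ref{T1}(ii) and~\ref{T1}(iii) and the hypothesis $n\ge 2$ combine (and indeed the conclusion fails for $n=1$, e.g.\ in $D_{10}$). The remaining steps are routine --- the direct decomposition $C=N\times P_C$, the passage to $V=\Omega_1(Z(P_C))$, and the orbit-counting argument that a $2$-group acting on a nonzero $\mathbb{F}_2$-space has a nonzero fixed vector.
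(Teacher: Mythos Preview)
Your proof is correct, but the route is genuinely different from the paper's. The paper invokes the full classification of filled $2$-groups (Corollary~\ref{2group}) to pin down the isomorphism type of the Sylow $2$-subgroup $H\cong G/N$, and then argues case by case: either $Z(H)$ contains a Klein four-group, or $H$ contains a copy of $D_8$ whose centre equals $Z(H)$; in each case the fact that involutions act on $N=\langle x\rangle$ as $x\mapsto x^{\pm 1}$ forces some element of $Z(H)$ to centralise $N$, and such an element is then central in $G=NH$.

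Your argument bypasses Section~\ref{secnil} entirely. You only need Theorem~\ref{T1}(ii),(iii) to rule out $C_G(N)=N$ (since $G/N$ would then be cyclic of order $\ge 4$), and the rest is pure group theory: $C=N\times P_C$, the characteristic subgroup $V=\Omega_1(Z(P_C))$ is normal in $G$ and centralised by $N$, and the $2$-group $G/C_G(V)$ must fix a nonzero vector in $V$. This is cleaner and logically lighter --- it makes Lemma~\ref{ncyclic} independent of Corollary~\ref{2group}, which the paper's proof relies on. (Incidentally, you do not really need Schur--Zassenhaus: since $N$ is central in $C$, any Sylow $2$-subgroup $P_C$ of $C$ already satisfies $C=N\times P_C$ by elementary order considerations.) The paper's version has the modest virtue of being entirely hands-on, but at the cost of depending on a substantial earlier classification.
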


\begin{proof}
Suppose $N$ is normal of order $p$ in $G$. Then $G = NH$ where $H$ is any Sylow 2-subgroup of $G$. This means $G/N \cong H$. Since $G$ is filled, $G/N$ must be filled. By Corollary \ref{2group} $H$ is either an elementary abelian 2-group, or $D_8$, $D_8\times C_2$, $D_8\ast Q_8$ or $D_8\ast Q_8 \times C_2$. Since $H$ has order at least 4, it follows that either $H$ contains a Klein 4-group $K = \langle a, b\rangle$ such that $K$ is central in $H$, or $H$ contains a subgroup $D$ which is dihedral of order 8, whose centre is also the centre of $H$. 
In the first scenario, consider the action of $H$ on $N$ by conjugation. Write $N = \langle x \rangle$. Now $axa^{-1} = x^i$ for some $i$, and $x = a^2xa^{-2} = x^{i^2}$. Thus $i = \pm 1$ (because in the cyclic group of units of $\mathbb{Z}_p$ the element 1 has exactly 2 square roots). If $axa^{-1} = x^{-1}$ and $bxb^{-1} = x^{-1}$, then $(ab)x(ab)^{-1} = x$. Therefore at least one involution $g$ in $K$ centralises $x$. This means we have $g \in Z(H) \cap C_G(N)$. Thus $g \in Z(G)$. Now consider the second situation, where $H$ contains a subgroup $D$ which is dihedral of order 8 whose centre is also the centre of $H$. We have $D = \langle r,s: r^2 = s^2 = (rs)^4 = 1\rangle$. Again looking at the action on $N$ by conjugation, we have that $rxr^{-1}  = x^{\pm 1}$ and $sxs^{-1} = x^{\pm 1}$, which implies $(rs)x(sr)^{-1} = x^{\pm 1}$. Let $g = (rs)^2$. Then $g \in Z(H)$ and $gxg^{-1} = x$, so $g \in C_G(N)$. Hence again $G$ contains a central involution.
	\end{proof}

\begin{prop}
	\label{n=11} For any $k \geq 2$, there is no filled group of order $11 \times 2^k$.
\end{prop}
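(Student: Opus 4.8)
The plan is to follow the same inductive pattern used in Corollaries~\ref{n=3}, \ref{n=5} and \ref{n=7}, using Lemma~\ref{lem:2kp} to produce small normal $2$-subgroups and then pass to quotients. First I would note that $\sum_{r=1}^{\infty}\lfloor 11/2^r\rfloor = 5+2+1 = 8$, so Lemma~\ref{lem:2kp} applies to any group of order $11\times 2^k$ with $k\geq 9$, guaranteeing a nontrivial normal elementary abelian $2$-subgroup, and in fact a normal $2$-subgroup of order at most $2^{11}$ (though the argument in the proof actually gives a normal $2$-subgroup of order $1$, $2$, $4$, $\ldots$ up to $2^p$; for the induction what matters is that we get a normal subgroup $H$ with $2\leq |H|\leq 2^8$, say, or we just need \emph{some} proper nontrivial normal $2$-subgroup whose order is a power of $2$ bounded so that $G/H$ has order $11\times 2^j$ for some $2\leq j\leq k$). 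The base cases $k = 2, 3, \ldots, K_0$ for a suitable threshold $K_0$ (large enough that $11\times 2^{K_0+1-8}\geq 11\times 2^2$, i.e.\ covering enough small orders that Lemma~\ref{lem:2kp} bites) are to be dispatched by the GAP computer search described in Section~\ref{secalg}: there are no filled groups of order $44, 88, 176, 352, 704, 1408$, and we would need to push the machine computation far enough (or argue by hand for the larger orders where $k$ is big enough that Lemma~\ref{lem:2kp} already applies) to anchor the induction.

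The inductive step is then routine: assume there is no filled group of order $11\times 2^j$ for all $j$ with $2\leq j\leq k$, and let $G$ have order $11\times 2^{k+1}$ with $k+1$ large enough that Lemma~\ref{lem:2kp} applies (i.e.\ $k+1\geq 9$). By Lemma~\ref{lem:2kp}, $G$ has a nontrivial normal $2$-subgroup $H$ with $|H| = 2^i$ for some $i$ with $1\leq i\leq 8$; then $G/H$ has order $11\times 2^{k+1-i}$ with $2\leq k+1-i\leq k$, hence is not filled by the induction hypothesis, and so by Theorem~\ref{T1}(ii) $G$ is not filled. Combined with the base cases this completes the induction and the proposition.

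The main obstacle is purely the bookkeeping at the boundary between the computer-checked base cases and the range where Lemma~\ref{lem:2kp} is available: Lemma~\ref{lem:2kp} only guarantees a normal $2$-subgroup once $k$ exceeds $8$, so the induction cannot start its self-sustaining step until $k+1\geq 9$, i.e.\ order $\geq 11\times 2^9 = 5632$. Thus the proof must verify by machine (or by some ad hoc structural argument) that there are no filled groups of order $11\times 2^k$ for every $k$ from $2$ up to $9$, i.e.\ orders $44$ through $5632$ — or, more cleverly, arrange the quotient-chasing so that for $9\leq k+1$ we reduce to an order still in the computer-verified range. Concretely, for $k+1 = 9$ (order $5632$) a normal subgroup $H$ of order $2^i$ with $1\leq i\leq 7$ gives $G/H$ of order $11\times 2^{9-i}$ with $9-i$ between $2$ and $8$, all of which we assume (or check) are not filled; so it suffices to confirm by computer that there are no filled groups of order $44, 88, 176, 352, 704, 1408, 2816, 5632$. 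I would therefore present the proof as: (i) quote these non-existence facts from the Section~\ref{secalg} computations, (ii) invoke Lemma~\ref{lem:2kp} and Theorem~\ref{T1}(ii) for the inductive step exactly as in Corollary~\ref{n=7}, and (iii) conclude. I expect no genuine mathematical difficulty beyond ensuring the computational base cases reach far enough; the one subtlety to state carefully is that Lemma~\ref{lem:2kp} as proved yields a normal $2$-subgroup of order $\leq 2^{p}=2^{11}$, which is more than enough, but one should double-check that the quotient order $11\times 2^{k+1-i}$ indeed lands in the inductively-covered range for the chosen threshold.
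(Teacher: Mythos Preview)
Your plan follows the template of Corollaries~\ref{n=3}--\ref{n=7}, but there is a concrete arithmetic gap. Lemma~\ref{lem:2kp} bounds the order of the normal $2$-subgroup by $2^p = 2^{11}$, not by $2^8$ or $2^7$; you appear to be conflating the hypothesis threshold $k > 8$ with a bound on $|H|$. Thus when you pass to $G/H$ of order $11\times 2^{k+1-i}$ you only know $1\le i\le 11$, and to guarantee $k+1-i\ge 2$ you need $k\ge 12$, i.e.\ machine verification of every $k$ from $2$ through $12$ (orders up to $11\times 2^{12}=45056$). Your list stops at $k=9$, which is not enough: already at $k+1=10$ the lemma might hand you $|H|=2^9$, giving a quotient of order $22$, and $D_{22}$ \emph{is} filled. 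So the ``bookkeeping'' you flag is precisely where the argument breaks.

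Even with the arithmetic repaired, brute-forcing all groups of orders up to $45056$ is not realistic (these are far outside the Small Groups library), and this is exactly why the paper does \emph{not} simply extend the pattern of Corollaries~\ref{n=3}--\ref{n=7}. Instead it splits on the Sylow structure: if $G$ has a normal Sylow $11$-subgroup, Lemma~\ref{ncyclic} produces a central involution, and quotienting by it drops $k$ by one (so the only base case needed here is order $44$); if the Sylow $2$-subgroup is normal, the quotient has order $11$ and is not filled. The residual case of no normal Sylow subgroup forces, by the Sylow congruence, at least $2^{10}$ Sylow $11$-subgroups; a counting argument rules out $k=10$, and for $k=11,12$ there are just five such groups (constructed via \texttt{GrpConst}) to test directly. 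Only for $k\ge 13$ does the paper invoke Lemma~\ref{lem:2kp}, and at that point $k+1-i\ge 2$ automatically. The upshot is that the paper's route reduces the computation to order $44$ plus five specific large groups, rather than an exhaustive sweep of all groups up to order $45056$.
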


\begin{proof} We proceed by induction on $k \geq 2$. Computer search shows there is no filled group of order 44. Let $G$ be a group of order $11\times 2^k$ for $k > 2$ and suppose for a contradiction that $G$ is filled. If $G$ has a normal Sylow 2-subgroup then the quotient of $G$ by this subgroup would be filled of order 11, which is impossible. So we can assume $G$ does not have a normal Sylow 2-subgroup. If $G$ has a normal Sylow 11-subgroup $N$, then, by Lemma \ref{ncyclic}, $G$ contains a central involution $g$. The quotient $G/\langle g \rangle$ is filled of order $11 \times 2^{k-1}$. By induction $G/\langle g \rangle$ is not filled, and so $G$ cannot be filled. Suppose then that the Sylow subgroups are not normal. The number of Sylow 11-subgroups divides $2^k$ and is congruent to 1 modulo 11. So the first time this can arise is when $k=10$. A simple counting argument shows that any group of order $11 \times 2^{10}$ has either a normal Sylow 11-subgroup or a normal Sylow 2-subgroup, so there is nothing to check here. There is one group of order $2^{11} \times 11$ with non-normal Sylow subgroups, and four such groups of order $2^{12}\times 11$. The package GrpConst in GAP \cite{GAP4} allows the user to construct all solvable groups of given order, and the function FrattiniExtensionMethod restricts to those groups with only non-normal Sylow subgroups. Thus, even though these five groups are not contained in the Small Groups library of GAP, they can be constructed and tested using the methods described in Section \ref{secalg}. The upshot is that no group of order $11 \times 2^{10}$, $11 \times 2^{11}$ or $11 \times 2^{12}$ is filled. We may therefore assume $k \geq 13$. By Lemma \ref{lem:2kp} there is a normal elementary abelian 2-subgroup $N$ of $G$ with order at most $2^{11}$. Thus $G/N$ is filled of order $11 \times 2^m$ where $2 \leq m < k$, a contradiction. Hence $G$ is not filled. The result now follows by induction.\end{proof}

\begin{thm}
	Let $G$ be a group of order $2^np$ where $n \geq 1$ and $p$ is an odd prime. If $G$ is filled, then $G$ is one of $D_6, D_{10}$, $D_{12}$, $D_{14}$ or $D_{22}$.
\end{thm}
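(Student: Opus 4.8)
The plan is to prove this by combining all the pieces assembled earlier in the paper with a dihedral-reduction argument, handling the small primes $p \in \{3,5,7,11\}$ separately and then disposing of all primes $p \geq 13$ uniformly. First I would reduce to the case $n \geq 2$: if $n = 1$ then $|G| = 2p$ and $G$ is either cyclic (not filled, by Theorem~\ref{T1}(iii) unless $p$ is tiny, and $C_3, C_5$ have the wrong order here) or dihedral, and by Theorem~\ref{C1} the only filled dihedral groups are $D_6, D_8, D_{10}, D_{12}, D_{14}, D_{22}$, of which those of order $2p$ are exactly $D_6, D_{10}, D_{14}, D_{22}$. So from now on assume $n \geq 2$ and $G$ is filled of order $2^n p$.

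Next I would pin down the prime $p$. By Theorem~\ref{T1}(ii) and (iii), since $G$ has a normal subgroup of index $p$ (take a Sylow $2$-subgroup $H$, and note that if $G$ has a normal Sylow $p$-subgroup $N$ then $G/N \cong H$, while if not we argue via Lemma~\ref{lem:2kp} to get a normal $2$-subgroup whose quotient is still of the form $2^m p$)—more carefully, the relevant constraint is: $G$ is filled, so every quotient of $G$ is filled, and iterating we cannot have an abelian filled quotient other than $C_3, C_5$ or an elementary abelian $2$-group. Corollaries~\ref{n=3}, \ref{n=5}, \ref{n=7} and Proposition~\ref{n=11} already rule out $p \in \{3,5,7,11\}$ entirely for $n \geq 2$ (indeed for all the relevant ranges of $k$). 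So it remains to handle $p \geq 13$.

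For $p \geq 13$ the key is that $G$ must have a normal Sylow $p$-subgroup. I would argue: the number $n_p$ of Sylow $p$-subgroups divides $2^n$ and is $\equiv 1 \pmod p$, so $n_p = 1$ unless $2^n \geq p+1$, i.e. unless $n$ is fairly large relative to $p$; when $n$ is large, Lemma~\ref{lem:2kp} furnishes a nontrivial normal elementary abelian $2$-subgroup $N$ of order at most $2^p$, and $G/N$ is then a filled group of order $2^m p$ with $2 \le m < n$, so by induction on $n$ we are done unless $n$ is small—but if $n$ is small then $2^n < p + 1$ forces $n_p = 1$ after all. Once the Sylow $p$-subgroup $N$ is normal, Lemma~\ref{ncyclic} gives a central involution $g \in Z(G)$, and $G/\langle g\rangle$ is filled of order $2^{n-1}p$, so by induction (base case: there is no filled group of order $2^2 p$ for $p \geq 13$) the group $G$ cannot be filled. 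The base case $n = 2$, i.e.\ order $4p$ with $p \geq 13$: here a normal Sylow $p$-subgroup exists ($n_p \mid 4$ and $n_p \equiv 1 \pmod p$ forces $n_p = 1$), so by Lemma~\ref{ncyclic} there is a central involution $g$, and $G/\langle g\rangle$ is a filled group of order $2p$, hence dihedral of order $2p$ with $p \in \{3,5,7,11\}$ by Theorem~\ref{C1}—contradicting $p \geq 13$.

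The main obstacle is the careful bookkeeping in the $p \geq 13$ case: one must set up the double induction (on $n$, with the Sylow-counting dichotomy feeding back into Lemma~\ref{lem:2kp}) so that the inductive hypothesis always applies to a strictly smaller group of an admissible order, and one must be sure the base cases are genuinely covered—in particular that $2^n p$ with $n = 2$ and $p \geq 13$ reduces, via a central involution, all the way down to a dihedral group of order $2p$ whose non-filledness is already known. The arguments for $p \in \{3,5,7,11\}$ are entirely contained in the cited corollaries and proposition, so no new work is needed there; everything else is assembling Theorem~\ref{C1}, Theorem~\ref{nilpotent}/Corollary~\ref{2group}, Lemma~\ref{lem:2kp} and Lemma~\ref{ncyclic} in the right order.
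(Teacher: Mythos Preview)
Your reduction for $n=1$ and your handling of $p\in\{3,5,7,11\}$ via Corollaries~\ref{n=3}--\ref{n=7} and Proposition~\ref{n=11} match the paper exactly. The issue is in the $p\geq 13$ case, where your dichotomy ``$n$ small so $n_p=1$'' versus ``$n$ large so Lemma~\ref{lem:2kp} applies'' does not exhaust all $n$. Your ``small'' condition is $2^n<p+1$, i.e.\ $n\leq\lfloor\log_2 p\rfloor$, while Lemma~\ref{lem:2kp} requires $n>\sum_{r\geq 1}\lfloor p/2^r\rfloor=p-s_2(p)$ (with $s_2(p)$ the binary digit sum of $p$). For $p=17$, say, this leaves the entire range $5\leq n\leq 15$ uncovered: here $2^n\geq p+1$ so $n_p$ need not be $1$ (indeed $n_p=256$ is possible once $n\geq 8$), yet Lemma~\ref{lem:2kp} does not apply. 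Your argument gives no reason why $G$ should have either a normal Sylow $p$-subgroup or a nontrivial normal $2$-subgroup in this range.

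The paper closes this gap by a much simpler device, avoiding Lemma~\ref{lem:2kp} altogether for $p>11$: since $|G|=2^np$ the group is solvable, so a \emph{minimal} normal subgroup $N$ is elementary abelian, hence either $N\cong C_p$ or $N$ is an elementary abelian $2$-group. In the first case Lemma~\ref{ncyclic} yields a central involution and you quotient by it; in the second case $G/N$ has order $p$ or $2^mp$ with $1\leq m<n$, and neither is filled for $p>11$ by induction and Theorem~\ref{T1}(iii). This single observation replaces your Sylow-counting/Lemma~\ref{lem:2kp} split and makes the induction on $n$ go through uniformly.
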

\begin{proof}
We have dealt with $p = 3, 5, 7$ and $11$. It only remains to show that if $p > 11$, then there are no filled groups of order $2^np$. We proceed by induction on $n$. If $n=1$, then the result holds by Theorems~\ref{T1}(iii) and \ref{C1}. Suppose $n \geq 2$. Let $N$ be a minimal normal subgroup of $G$. Then $N$ is either cyclic of order $p$ or an elementary abelian 2-group. If $N$ is cyclic of order $p$ then by Lemma \ref{ncyclic},  $G$ has a central involution $g$. Now $G/\langle g \rangle$ has order $2^{n-1}p$, so by inductive hypothesis is not filled. Hence $G$ is not filled. So assume $N$ is an elementary abelian 2-group. Then $G/N$ is either cyclic of order $p$ or has order $2^mp$ where $1 \leq m < n$. In either case, since $p > 11$, we know that $G/N$ is not filled. Therefore $G$ is not filled. By induction no group of order $2^np$ is filled, when $p > 11$. This completes the proof. 	
\end{proof}

\section{Groups of order up to 2000}\label{secalg}

In this section we describe the computer algorithms used to determine the filled status of a group.
These algorithms are implemented in GAP~\cite{GAP4} and allow us to test all groups in the library of small groups up to order 2000. We note that although there are nearly 50 billion groups of these orders, the vast majority are accounted for by the 2-groups (classified in Section \ref{secnil}) and groups of order 1536 (classified in Section \ref{sec2np}).

The first algorithm (Algorithm~\ref{alg:nonfill}) attempts to find a locally maximal product-free set in a given group $G$ which does not fill $G$. 
The strategy is to repeatedly add elements at random to a product-free set $S$ until $S$ is locally maximal. 
At each stage we keep track of the set $F$ of elements which could be added to $S$ to keep it product-free.
If our maximal set $S$ fills $G$ we discard it and start again, returning the first set $S$ found which does not fill $G$.
Note that by Lemma~\ref{involutions}, if $G$ is an extraspecial 2-group we may begin each search by placing the unique central involution in $S$.
In practice if this algorithm fails to return a result in a reasonable time we abort and use the exhaustive search method of Algorithm~\ref{alg:exhaustive}.

\begin{algorithm}
\begin{algorithmic}
\caption{Find a non-filling locally maximal product-free set for a group $G$}
\label{alg:nonfill}
\Function{NFS}{$G$}
\Repeat
\If{$G$ is an extraspecial 2-group}
	\State $S\gets Z(G)\setminus\{1\}$
\Else
	\State $S\gets \emptyset$
\EndIf
	\State $F\gets G\setminus (\{1\}\cup S\cup \sqrt{S})$
	\Repeat
		\State $x\gets \mathrm{Random}(F)$
		\State $S\gets S\cup\{x\}$
		\State $F\gets F\setminus (S\cup SS\cup SS^{-1}\cup S^{-1}S\cup \sqrt{S})$
	\Until $F=\emptyset$
\Until $\{1\}\cup S\cup SS\neq G$
\State\Return $S$
\EndFunction
\end{algorithmic}
\end{algorithm}

The second algorithm (Algorithm~\ref{alg:exhaustive}) performs an exhaustive search of locally maximal product-free sets $S$ in a group $G$ and tests whether any fails to fill $G$.
This algorithm is very expensive, and is only required when the random method of Algorithm~\ref{alg:nonfill} has failed to return a result in a reasonable time.
The key to making this algorithm run efficiently is the observation that if $\phi$ is an automorphism of $G$, then $S$ is a locally maximal product-free
subset of $G$ if and only if $\phi(S)$ is locally maximal product-free. Thus the problem of testing all possible sets $S$ is reduced to testing only orbit
representatives under the action of the automorphism group of $G$. 

While these orbits can be readily found using GAP, in practice computing orbits of all possible subsets of $G$ is still prohibitively expensive.
To get round this problem, we begin by computing orbits of all product-free sets $S$ of size 3. 
From each orbit we choose the minimal representative set (with respect to some arbitrary ordering of the elements of $G$).
For each such representative set $S$, we then try to extend $S$ in all possible ways to obtain a locally maximal product-free set and test whether each possible
extension fills our group $G$. 
We need only consider extensions using the set $F$ of elements larger than any currently in our set $S$ and which keep $S$ product-free, so again we keep track of
this set. Each time we add a new element $x$ to $S$, we test whether $S$ is locally maximal and if not, we (recursively) extend this new set.
The algorithm terminates when either a non-filling locally maximal set has been found, or all possible sets have been examined.

\begin{algorithm}
\begin{algorithmic}
\caption{Exhaustive search for locally maximal product-free sets}
\label{alg:exhaustive}
\Function{ExhaustiveSearch}{$G$}
\State $O\gets$ set of orbit representatives of product-free sets of 3 elements of $G$ under action of $\mathrm{Aut}(G)$
\ForEach{$S\in O$}
	\State $F\gets G\setminus (S\cup SS\cup SS^{-1}\cup S^{-1}S\cup \sqrt{S})$
	\If{\Not\Call{ExtendPFS}{$G$,$S$,$F$}}
		\State\Return\False
	\EndIf
\EndFor
\State\Return\True
\EndFunction
\State
\Function{ExtendPFS}{$G$,$S$,$F$}
\If{$F=\emptyset$}
	\If{$S\cup SS\cup SS^{-1}\cup S^{-1}S\cup \sqrt{S}=G$}
		\If{$\{1\}\cup S\cup SS\neq G$}
			\State\Return\False
		\EndIf
	\EndIf
\Else
	\ForEach{$x\in F$}
		\State $S'\gets S\cup\{x\}$
		\State $F'\gets\{f\in F|f>x\}\setminus (S'\cup S'S'\cup S'S'^{-1}\cup S'^{-1}S'\cup \sqrt{S'})$
		\If{\Not\Call{ExtendPFS}{$G$,$S'$,$F'$}}
			\State\Return\False
		\EndIf
	\EndFor
\EndIf
\State\Return\True
\EndFunction
\end{algorithmic}
\end{algorithm}

Our final algorithm (Algorithm~\ref{alg:filled}) is used to determine whether a given group $G$ is filled. It uses the results from previous sections to exclude most groups without
the need to resort to construction of non-filling sets. For those groups which cannot be excluded in this way, we use the random method of 
Algorithm~\ref{alg:nonfill} to find a non-filling set. If all else fails, we resort to the exhaustive method of Algorithm~\ref{alg:exhaustive}.

We begin by defining the set $\mathcal{G}$ of filled groups of order at most 32, as given in~\cite[Table 1]{AH2015}.
For larger groups we then apply the simple tests using Theorems \ref{T1}(iii), \ref{T1}(vi), \ref{T1}(vii), \ref{C1} and \ref{exsp}.
If these are not sufficient to determine the status of our group we examine its normal subgroups and invoke Theorems \ref{T1}(ii), \ref{T1}(iv) and \ref{T1}(v).
Finally, if the status of the group is still not resolved we use Algorithms \ref{alg:nonfill} then \ref{alg:exhaustive} to search for non-filling sets.

\begin{algorithm}[h!]
\begin{algorithmic}
\caption{Test whether a group $G$ is filled}
\label{alg:filled}
\Function{Filled}{$G$}
\State $n\gets|G|$
\If{$n\leq 32$}
	\If{$G\in\mathcal{G}$}
		\State\Return\True
	\Else
		\State\Return\False
	\EndIf
\ElsIf{$n$ is odd}
	\State\Return\False
\ElsIf{$G$ is elementary abelian}
	\State\Return\True
\ElsIf{$n=2^k$ where $k>7$}
	\State\Return\False
\ElsIf{$n=2^k p$ where $k>0$ and $p$ is an odd prime}
	\State\Return\False
\ElsIf{$G$ is abelian, dihedral or generalised quaternion}
	\State\Return\False
\Else
	\ForEach{proper non-trivial normal subgroup $N\lhd G$}
		\If{$[G:N]=3$ or $[G:N]=5$ and not all elements of order 5 are in $N$}
			\State\Return\False
		\EndIf
		\If{\Not \Call{Filled}{$G/N$}}
			\State\Return\False
		\EndIf
	\EndFor
\EndIf
\If{\Call{NFS}{$G$} succeeds}
	\State\Return\False
\Else
	\State\Return\Call{ExhaustiveSearch}{$G$}
\EndIf
\EndFunction
\end{algorithmic}
\end{algorithm}

We remark that when we were proving the results of Section \ref{sec2np} about groups of order $2^kp$, we used a version of Algorithm~\ref{alg:filled} that omits the condition\smallskip\\
\hspace*{1in} {\bf{else if }} $n=2^k p$ where $k>0$ and $p$ is an odd prime {\bf then}\\
\hspace*{1in} {\bf{return false}}.\smallskip

 Using these methods we have examined all groups in the small groups library in GAP up to order 2000. 
The only filled groups are those noted in~\cite[Table 1]{AH2015} plus the group $(D_8\ast Q_8)\times C_2$ of order 64 and the elementary abelian 2-groups. We conclude with the following conjecture.

\begin{conj}
 Let $G$ be a finite group. Then $G$ is filled if and only if $G$ is either an elementary abelian 2-group or one of $C_3, C_5$, $D_6$, $D_8$, $D_{10}$, $D_{12}$,  $D_{14}$, $D_8 \times C_2$, $D_{22}$, $D_8 \ast Q_8$ or $(D_8 \ast Q_8) \times C_2$.
\end{conj}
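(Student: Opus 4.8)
\medskip
\noindent\textbf{A strategy towards the conjecture.} The backward direction is already in hand: the elementary abelian $2$-groups are filled by the original observation of Street and Whitehead, $C_3$ and $C_5$ by Theorem~\ref{T1}(iii), the $2$-groups $D_8$, $D_8\times C_2$, $D_8\ast Q_8$ and $(D_8\ast Q_8)\times C_2$ by Corollary~\ref{2group}, and $D_6,D_{10},D_{12},D_{14},D_{22}$ by the computations in~\cite{AH2015}. So the whole content is the forward direction, and the plan is to argue by taking a counterexample $G$ of least order. Since filledness passes to quotients (Theorem~\ref{T1}(ii)), every proper nontrivial quotient of $G$ is filled, hence --- by minimality --- lies in the conjectured list; in particular its order involves no prime larger than $11$. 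By the computer verification of Section~\ref{secalg} we may assume $|G|>2000$, and by Theorem~\ref{T1}(vi) the order of $G$ is even.

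The first structural step is to analyse a minimal normal subgroup $N=T^m$ of $G$, $T$ simple. If $T$ is nonabelian and $C_G(N)\ne 1$, then $C_G(N)$ contains a minimal normal subgroup $L\ne N$, and $G/L$ --- a filled group from the list --- contains a copy of $N$, hence a nonabelian simple subgroup, which no member of the list does; so $C_G(N)=1$, $N=\mathrm{soc}(G)$ is the unique minimal normal subgroup, $G\hookrightarrow\mathrm{Aut}(T)\wr\mathrm{Sym}(m)$ with $G$ transitive on the $m$ factors, and $G/N$ is a filled list group lying in $\mathrm{Out}(T)\wr\mathrm{Sym}(m)$; call this the \emph{almost simple type}. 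If instead $T=C_r$ is cyclic, then $N$ is elementary abelian, $G/N$ is a filled list group, and $|G|=r^m|G/N|$; note that $G$ is nonsolvable if and only if it is of almost simple type.

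The solvable case should then be closed by pushing the methods of Sections~\ref{secdih}--\ref{sec2np}. When $N$ has odd order $r$ and $G/N$ is a $2$-group (so $|G|=2^ar$), the classification of filled groups of order $2^np$ forces $|G|\le 22$, a contradiction; Theorem~\ref{T1}(iv),(v) dispose of $G/N\cong C_3$ and $G/N\cong C_5$; and when $N=C_2^k$ one typically reaches a quotient whose order is divisible by a product of two odd primes or has too large an odd part, contradicting the list. The residual configurations --- those where, after repeated reduction, $G$ has the shape $C_r^{\,k}\rtimes H$ with $r$ an odd prime and $H$ a filled $2$-group acting faithfully (so $|G|=2^ar^b$ with $b\ge 2$) --- are the only genuinely new solvable cases: here one must exhibit, using Theorem~\ref{T1}(i) to certify local maximality, a locally maximal product-free set that is not complete, in the style of the explicit reflection-plus-rotation sets constructed for dihedral groups in Proposition~\ref{P1}, with a normal-$2$-subgroup argument as in Lemma~\ref{lem:2kp} and a central-involution descent as in Lemma~\ref{ncyclic} doing the bookkeeping. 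I expect this part to be laborious but essentially routine.

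The main obstacle is the almost-simple-type (equivalently, nonsolvable) case: showing no such $G$ is filled. There is no apparent structural shortcut --- filledness is not inherited by subgroups, and one cannot argue that a large group automatically has a small locally maximal product-free set that fails to be complete, since the filled elementary abelian $2$-groups themselves have only locally maximal product-free sets of size of order $\sqrt{|G|}$. My plan would be to invoke the classification of finite simple groups and, family by family (alternating, classical, exceptional, sporadic) and for each admissible $G$ with $\mathrm{soc}(G)=T^m$, build an explicit locally maximal product-free set --- most plausibly from a large subgroup of $G$ together with a carefully chosen transversal-like set --- that omits some element of $G^{\ast}$ from $S\cup SS$. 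Producing such a construction uniformly across the infinite families of simple groups is, I believe, the crux, and is the reason the statement is offered here only as a conjecture.
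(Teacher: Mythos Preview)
The paper does not prove this statement: it is explicitly stated as a \emph{conjecture}, supported only by the classification results of Sections~\ref{secdih}--\ref{sec2np} and the computer verification up to order $2000$ described in Section~\ref{secalg}. There is therefore no proof in the paper to compare your attempt against. Your submission is, appropriately, not a proof either but a strategic outline, and you acknowledge as much in your final sentence.

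Within that outline, the backward direction is correctly noted as already established, and your minimal-counterexample framework for the forward direction is a sensible way to organise an attack; you also rightly isolate the nonsolvable (almost-simple-type) case as the genuine obstruction. One point of looseness worth flagging: in the solvable case a minimal normal subgroup $N$ of odd order is elementary abelian $C_p^{\,m}$, not necessarily of prime order, so your appeal to the $2^np$ classification (``forces $|G|\le 22$'') only handles $m=1$ directly. You do return to the residual $|G|=2^a r^b$, $b\ge 2$ configurations later, but the first pass conflates the two situations. None of this constitutes a proof, of course, and the paper makes no claim to one.
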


\end{document}